\documentclass[12pt]{amsart}
\usepackage{amscd}
\usepackage{amssymb}
\newcommand{\be}{\begin{equation}}
\newcommand{\ee}{\end{equation}}
\newcommand{\ba}{\begin{eqnarray}}
\newcommand{\ea}{\end{eqnarray}}
\newcommand{\ban}{\begin{eqnarray*}}
	\newcommand{\ean}{\end{eqnarray*}}

\def\XXint#1#2#3{{\setbox0=\hbox{$#1{#2#3}{\int}$}
		\vcenter{\hbox{$#2#3$}}\kern-.5\wd0}}

\newcommand{\Rk}{\noindent {\bf Remark} }

\newtheorem{theo}{Theorem}[section]

\begin{document}
	\newtheorem{defn}[theo]{Definition}
	\newtheorem{ques}[theo]{Question}
	\newtheorem{lem}[theo]{Lemma}
	\newtheorem{prop}[theo]{Proposition}
	\newtheorem{coro}[theo]{Corollary}
	\newtheorem{ex}[theo]{Example}
	\newtheorem{note}[theo]{Note}
	\newtheorem{conj}[theo]{Conjecture}
	\makeatletter
	\@namedef{subjclassname@2020}{%
		\textup{2020} Mathematics Subject Classification}
	\makeatother

	\title[Ricci Shrinker with Constant scalar curvature]{A note on Rigidity of  Shrinking Gradient Ricci Solitons with Constant Scalar Curvature}
	\author{Chen Wang}
	\address[Chen Wang]
	{School of Science, Zhejiang Sci-Tech University, Hangzhou 310018, China}
	\email{2023210103029@mails.zstu.edu.cn}

	\author{Guoqiang Wu}
	\address[Guoqiang Wu]
	{School of Science, Zhejiang Sci-Tech University, Hangzhou 310018, China}
	\email{gqwu@zstu.edu.cn}

	\subjclass[2020]{53C21; 53C44}
	
	\keywords{Ricci soliton, Constant scalar curvature, Weighted Laplacian}
	\date{}
	\maketitle

	\begin{abstract} Let $(M^n, g, f)$ be an $n$-dimensional complete noncompact gradient shrinking Ricci soliton with the equation $Ric+\nabla^2f= \frac{1}{2}g$.

 1. If its scalar curvature is $\frac{k}{2}$, Ricci curvature is nonnegative and sectional curvature has upper bound $\frac{1}{2(k-1)}$, we prove that the Ricci shrinker is isometric to a finite quotient of $\mathbb{R}^{n-k}\times \mathbb{S}^k$.

 2. If $M$ has constant scalar curvature $R=\frac{n-2}{2}$, and  each level set of $f$ has vanishing Weyl curvature, we prove that it is a finite quotient of $\mathbb{R}^2\times \mathbb{S}^{n-2}$. This can be seen a generalization of Cheng-Zhou's four dimensional  result  \cite{Cheng-Zhou} to high dimension, since the level set of the potential function $f$ has vanishing Weyl curvature automatically when $n=4$.
	\end{abstract}
	
	\section{Introduction}
	Let  $(M^n, g, f)$ be an $n$-dimensional complete gradient Ricci soliton with the potential function $f$ satisfying
	\begin{align}\label{soliton}
	\text{Ric}+\nabla^2f=\lambda g
	\end{align}
	for some constant $\lambda$, where $\text{Ric}$ is the Ricci tensor of $g$ and $\nabla^2f$ denotes the Hessian of the potential function $f$.
	The Ricci soliton is said to be shrinking, steady, or expanding accordingly as $\lambda$ is positive, zero, or negative, respectively. By rescalling the metric $g$ by a positive constant, we can assume $\lambda\in \{\frac{1}{2}, 0, -\frac{1}{2}\}$.
	\smallskip	
	
	A gradient Ricci soliton is a self-similar solution to the Ricci flow which flows by diffeomorphism and homothety. The study of solitons has become increasingly important in both the study of the Ricci flow introduced by Hamilton \cite{Hamilton} and metric measure theory. Solitons play a direct role as singularity dilations in the Ricci flow proof of uniformization.  Due to the work of Perelman \cite{Perelman2}, Ni-Wallach \cite{Ni-Wallach}, Cao-Chen-Zhu \cite{Cao-Chen-Zhu}, the classification of three dimensional shrinking gradient Ricci soliton is complete. For more work on the classification of gradient Ricci soliton under various curvature condition, see \cite{Brendle1, Brendle2, Cao-Chen, Cao-Chen2, Cao-Chen-Zhu, Cao-Xie, Cao-Wang-Zhang, Chen-Wang,Kotschwar, Eminenti-LaNave-Mantegazza,Munteanu-Wang4,  Naber, Petersen-Wylie, Pigola-Rimoldi-Setti, Wu-Zhang, Wu-Wu-Wylie,  Zhang}.

	\smallskip	
	In this paper, we focus our attention on $n$-dimensional gradient shrinking Ricci solitons with constant scalar curvature. Recall that in Petersen and Wylie's paper \cite{Petersen-Wylie}, a gradient Ricci soliton $(M, g)$ is said to be rigid if it is isometric to a quotient $\mathbb{N} \times \mathbb{R}^k$, the product soliton of an Einstein manifold $\mathbb{N}$ of positive scalar curvature with the Gaussian soliton $\mathbb{R}^k$.
	Conversely, for the complete shrinking case, Prof. Huai-Dong Cao raised the following
	
	\smallskip	
	\noindent {\bf Conjecture}:
	Let $(M^n, g, f)$, $n\geq 4$, be a complete $n$-dimensional gradient shrinking Ricci soliton. If $(M, g)$ has constant scalar curvature, then it must be rigid,
	i.e., a finite quotient of $\mathbb{N}^k\times \mathbb{R}^{n-k}$ for some Einstein manifold $\mathbb{N}$ of positive scalar curvature.
	
	\smallskip	
	Petersen and Wylie \cite{Petersen-Wylie} proved that a complete gradient Ricci soliton is rigid if and only if it has
	constant scalar curvature and is radially flat, that is, the sectional curvature $K(\cdot, \nabla f)=0$. Fern\'{a}ndez-L\'{o}pez and Garc\'\i a-R\'\i o \cite{FR16} obtained that the soliton is rigid if and only if the Ricci curvature has constant rank. They also derived the following results for complete $n$-dimensional gradient Ricci solitons \eqref{soliton} with constant scalar curvature $R$: (i) The possible value of $R$ is $\{0, \lambda, \cdots, (n-1)\lambda, n\lambda\}$. (ii) If $R$ takes the value $(n-1)\lambda$,  then the soliton must be rigid. (iii) In the shrinking case, there is  no any complete gradient shrinking Ricci soliton  with $R=\lambda$.
	(iv) Any $n$-dimensional gradient shrinking Ricci soliton with constant scalar curvature $R=(n-2)\lambda$ has non-negative Ricci curvature.
	
	\smallskip	
	Several years ago, Cheng and Zhou \cite{Cheng-Zhou} confirmed Cao's conjecture in dimension $n=4$.
Very recently, the authors gave a simple proof of Cheng-Zhou's result in \cite{Ou-Qu-Wu}. Later, the authors \cite{Li-Ou-Qu-Wu} finished the $5$-dimensional case when the scalar curvature is $\frac{3}{2}$. In this paper, we want to generalize this results to high dimension in some sense.

	Our main theorem is as follows.
	\begin{theo}\label{main}
		Suppose $(M^n, g, f)$ is an $n$-dimensional shrinking gradient Ricci soliton with $R=\frac{n-2}{2}$, if each level set of $f$ has vanishing Weyl curvature,  then it is isometric to a finite quotient of $\mathbb{R}^2 \times \mathbb{S}^{n-2}$.
	\end{theo}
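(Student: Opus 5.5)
The plan is to verify the Petersen--Wylie criterion: the soliton is rigid once $R$ is constant and it is radially flat, i.e. $\mathrm{Ric}(\nabla f,\cdot)=0$ and $K(\cdot,\nabla f)=0$. Equivalently, by Fern\'andez-L\'opez--Garc\'\i a-R\'\i o, it suffices to show that $\mathrm{Ric}$ has constant rank $n-2$. We collect the standard identities for constant $R=\frac{n-2}{2}$. Since $\nabla R=2\mathrm{Ric}(\nabla f)$, we get $\mathrm{Ric}(\nabla f)=0$. Then $R+|\nabla f|^2=f$ gives $|\nabla f|^2=f-\frac{n-2}{2}$, and $\Delta f=\frac n2-R=1$. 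The identity $\Delta_f R=R-2|\mathrm{Ric}|^2$ gives $|\mathrm{Ric}|^2=R/2=\frac{n-2}{4}$. By fact (iv), $\mathrm{Ric}\ge 0$. On the open set where $\nabla f\neq0$, set $\nu=\nabla f/|\nabla f|$. Since $\mathrm{Ric}(\nu)=0$, the Ricci eigenvalues $\lambda_1,\dots,\lambda_{n-1}\ge0$ on $T\Sigma$, with $\Sigma=\{f=c\}$, satisfy $\sum\lambda_i=\frac{n-2}{2}$ and $\sum\lambda_i^2=\frac{n-2}{4}$. The target is $\lambda_i\in\{0,\frac12\}$ with exactly $n-2$ equal to $\frac12$.

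Next we bring in the level-set geometry. The second fundamental form of $\Sigma$ is $h=\nabla^2 f/|\nabla f|=(\frac12 g-\mathrm{Ric})/|\nabla f|$ restricted to $T\Sigma$. Its mean curvature is $H=(\frac{n-1}{2}-R)/|\nabla f|=\frac{1}{2|\nabla f|}$. By Gauss's equation, $\mathrm{Ric}^\Sigma$ and $R^\Sigma$ are expressed through $\mathrm{Ric}$, $\mathrm{Rm}(\cdot,\nu,\cdot,\nu)$ and $h$. The ambient radial term $\mathrm{Rm}(e_i,\nu,e_j,\nu)$ is controlled by the soliton identity $\nabla_k R_{ij}-\nabla_j R_{ik}=R_{ijkl}\nabla_l f$. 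Contracting with $\nabla f$ and using $\mathrm{Ric}(\nabla f)=0$ gives
\[
\mathrm{Rm}(\nabla f,e_i,\nabla f,e_j)=\mathrm{Ric}^2_{ij}-\tfrac12 R_{ij}-\tfrac12\nabla_{\nabla f}R_{ij}.
\]
The vanishing Weyl curvature of $\Sigma$ (for $n-1\ge4$; for $n-1=3$ it is automatic and we use Cotton/Codazzi instead) means that the full intrinsic curvature of $\Sigma$ is determined by $\mathrm{Ric}^\Sigma$. Applying Gauss's equation to $R^\Sigma_{ijkl}$ and comparing it with the Schouten decomposition then gives pointwise algebraic relations between $\mathrm{Ric}$, $\mathrm{Ric}^2$ and the ambient curvature. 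The aim is a closed ODE along the flow of $\nu$ for the symmetric functions $\sigma_k$ of the $\lambda_i$ (for instance $\mathrm{tr}\,\mathrm{Ric}^3$), in the spirit of the $n=4$ argument of Cheng--Zhou and Ou--Qu--Wu.

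The key step, and the main obstacle, is to prove $\mathrm{tr}(\mathrm{Ric}^3)=\frac{n-2}{8}$, or equivalently $\sum\lambda_i^2(\frac12-\lambda_i)=0$. Combined with the two moment identities and $\lambda_i\ge0$, this forces each $\lambda_i\in\{0,\frac12\}$, hence rank $n-2$. The approach is to compute $\Delta_f$ of $\mathrm{tr}\,\mathrm{Ric}^3$, or of $|\mathrm{Rm}(\nabla f,\cdot,\nabla f,\cdot)|^2$, using $\Delta_f \mathrm{Ric}=\mathrm{Ric}-2\mathrm{Rm}*\mathrm{Ric}$. The Weyl-flat condition on $\Sigma$ should let us express the quadratic curvature term $R_{ikjl}R_{kl}$ purely through $\mathrm{Ric}$, $|\nabla f|^2$ and the radial curvature, giving a differential inequality of the form $\Delta_f\phi\ge c\,\phi$ or $\phi'\,\le -\phi/f$ along $\nu$ for a nonnegative defect quantity $\phi$. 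Using $|\nabla f|^2\sim f\to\infty$, the polynomial volume growth of shrinkers and integration by parts with the weight $e^{-f}$ (the integrals are finite by the Munteanu--Wang type curvature bounds available in the constant-$R$ case), we conclude $\phi\equiv0$.

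Once the rank is constant, Fern\'andez-L\'opez--Garc\'\i a-R\'\i o gives rigidity: $M$ is a finite quotient of $N^{n-2}\times\mathbb{R}^2$ with $N$ Einstein with constant $\frac12$. Finally, $N$ has Weyl tensor zero, because $N\times S^1$ embeds as a level set with vanishing Weyl curvature, and a product with a line is Weyl-flat only if the factor has constant curvature. Hence $N$ is a space form of positive curvature, and since a finite quotient is allowed, $M$ is a finite quotient of $\mathbb{R}^2\times\mathbb{S}^{n-2}$.
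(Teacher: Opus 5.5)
\emph{Verdict: there is a genuine gap.} Your endgame matches the paper's:
\begin{itemize}
\item show $\mathrm{tr}(\mathrm{Ric}^3)=\frac{n-2}{8}$, i.e.\ $G=\sum_i\lambda_i(\lambda_i-\frac12)^2=0$;
\item deduce that $\mathrm{Ric}$ has constant rank $n-2$ and apply Fern\'andez-L\'opez--Garc\'\i a-R\'\i o;
\item use Weyl-flatness of the level sets $S^1\times N$ to make $N$ spherical.
\end{itemize}
The gap is the step you yourself call the key step. It is only asserted: you say the Weyl condition ``should'' yield $\Delta_f\phi\ge c\,\phi$ for some defect $\phi$, after which weighted integration by parts over $M$ finishes. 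That is not what the computation gives.

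The paper takes $u=\lambda_1+\lambda_2=\lambda_2$, the smallest Ricci eigenvalue tangent to the level sets, and computes $\Delta_f u$ in the barrier sense with parallel frames. It then uses the radial identity, the Gauss equation and $W^\Sigma=0$ to express every $K_{2\alpha}$ through the eigenvalues and $\nabla_{\nabla f}\lambda_\alpha$. The result is
\begin{multline*}
\Delta_f u\le-\tfrac{2}{n-3}u+\tfrac{4}{n-3}u^2+\tfrac{n-2}{(n-3)f}\nabla f\cdot\nabla u\\
-\tfrac{2}{(n-3)f}\nabla f\cdot\nabla u^2+\tfrac{2}{f}u(1-u)\bigl(\tfrac{n-2}{2}-u\bigr).
\end{multline*}
Everything is divided by $|\nabla f|^2=f$, so this is singular on $\{f=0\}$ and carries error terms of size $u/f$ and $u|\nabla u|/\sqrt f$. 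It improves to $\Delta_h u\le-\frac{1}{n-3}u$, with a weight $h=f+O(\log f)$ absorbing the linear drift, only under two restrictions: on $M\setminus D(a)$ with $a$ large, and only once $|\nabla\mathrm{Ric}|$ is known to be bounded. Integrating over $M\setminus D(a)$ then leaves a boundary flux through $\Sigma(a)$ that must be controlled. A global integration by parts over $M$ is not available.

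Three ingredients are needed here, and none appears in your plan.

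(i) Bounded curvature. A polynomial curvature bound of Munteanu--Wang type makes the weighted integrals finite. It does not make $\frac{u}{f}\nabla f\cdot\nabla u$ negligible against $u$. The paper proves $|\mathrm{Rm}|\le C$ by contradiction:
\begin{itemize}
\item point-picking, Li--Wang noncollapsing and backward pseudolocality give a Ricci-flat blow-up limit;
\item this limit splits a line along the integral curves of $\nabla f$;
\item its cross-section is a limit of rescaled level sets, whose second fundamental form $(\frac12g-\mathrm{Ric})/|\nabla f|\to0$;
\item so the cross-section is Weyl-flat and Ricci-flat, hence flat, contradicting $|\mathrm{Rm}(p_\infty)|=1$.
\end{itemize}

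(ii) $u\to0$ at infinity. Using (i), a Naber-type limit along points with $u\ge\delta$ is $\mathbb{R}\times N^{n-1}$, where $N$ is locally conformally flat with $\mathrm{Ric}_N\ge0$ and $R_N=\frac{n-2}{2}$. The Zhu/Carron--Herzlich classification together with Caffarelli--Gidas--Spruck (Chen--Li) leaves only the cylinder $\mathbb{R}\times\mathbb{S}^{n-2}$. Its smallest Ricci eigenvalue vanishes, contradicting $u\ge\delta$.

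(iii) The boundary flux.
\begin{itemize}
\item Since $H=\frac{1}{2\sqrt s}$ on $\Sigma(s)$, the form $s^{-1/2}d\sigma_{\Sigma(s)}$ is invariant under the flow of $\nabla f/|\nabla f|^2$.
\item Hence $I(s)=s^{-1/2}\int_{\Sigma(s)}u\,d\sigma$ satisfies $I'(s)=\frac1s\int_{\Sigma(s)}\partial_\nu u\,d\sigma$.
\item By (ii), $I(s)\to0$, which produces a level $b$ with $\int_{\Sigma(b)}\partial_\nu u\,d\sigma\le0$.
\item The integrated inequality forces this flux to be at least a positive multiple of $\int_{M\setminus D(b)}u\,e^{-h}$, so $u\equiv0$ outside $D(b)$.
\item Real analyticity of $G$ then gives $G\equiv0$ on all of $M$.
\end{itemize}

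So the Weyl hypothesis enters not only pointwise but, essentially, through two limiting arguments. Without (i)--(iii) your argument does not close.

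A minor slip: the radial identity is $R(\nabla f,e_i,\nabla f,e_j)=\nabla_{\nabla f}R_{ij}+\frac12R_{ij}-(\mathrm{Ric}^2)_{ij}$, up to your sign convention. The coefficient of $\nabla_{\nabla f}R_{ij}$ is $1$, not $\frac12$.
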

\Rk Theorem \ref{main} can be seen a generalization of Cheng-Zhou's result \cite{Cheng-Zhou} in dimension four, since the level set of the potential function $f$ has vanishing Weyl curvature automatically when $n=4$.

	When the sectional curvature has sharp upper bounded, we also derive the following result.
\begin{theo}\label{theorem2}
Let $(M^n, g, f)$ be an $n$-dimensional shrinking gradient Ricci soliton with $R=\frac{k}{2}$. If it has nonnegative Ricci curvature and its sectional section has upper bound $\frac{1}{2(k-1)}$, then it is isometric to a finite quotient of  $\mathbb{S}^k\times \mathbb{R}^{n-k}$.
\end{theo}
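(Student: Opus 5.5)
The plan is to combine the standard identities for shrinkers with constant scalar curvature with a Bochner-type formula for $|Ric|^2$. The sectional curvature bound $\frac{1}{2(k-1)}$ is exactly what makes that formula force $\nabla Ric=0$. Parallel Ricci tensor gives constant rank, and rigidity then follows from Fern\'andez-L\'opez--Garc\'\i a-R\'\i o \cite{FR16} (or Petersen--Wylie).

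\emph{Step 1: $|Ric|^2$ is constant.} On any shrinker we have $\Delta_f R = R-2|Ric|^2$, where $\Delta_f=\Delta-\nabla_{\nabla f}$. Since $R=\frac{k}{2}$ is constant, this gives $|Ric|^2=\frac{R}{2}=\frac{k}{4}$ everywhere.

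\emph{Step 2: Bochner formula for $|Ric|^2$.} We use the elliptic equation $\Delta_f R_{ij}=R_{ij}-2R_{ikjl}R_{kl}$. Then
\begin{align*}
0=\Delta_f |Ric|^2 = 2|\nabla Ric|^2+2|Ric|^2-4\,Rm(Ric,Ric),
\end{align*}
where, at a point, in an orthonormal eigenbasis of $Ric$ with eigenvalues $\lambda_i$, we write $Rm(Ric,Ric)=\sum_{i\neq j}K(e_i,e_j)\lambda_i\lambda_j$.

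\emph{Step 3: use the curvature hypotheses.} Since $Ric\ge 0$, each $\lambda_i\lambda_j\ge 0$. The bound $K\le\frac{1}{2(k-1)}$ then gives
\begin{align*}
Rm(Ric,Ric)\le \frac{1}{2(k-1)}\sum_{i\neq j}\lambda_i\lambda_j=\frac{1}{2(k-1)}\big(R^2-|Ric|^2\big)=\frac{1}{2(k-1)}\Big(\frac{k^2}{4}-\frac{k}{4}\Big)=\frac{k}{8}.
\end{align*}
Substituting into Step 2 yields $0\ge 2|\nabla Ric|^2+\frac{k}{2}-\frac{k}{2}=2|\nabla Ric|^2$. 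Hence $\nabla Ric\equiv 0$, and all the inequalities are equalities. In particular $K(e_i,e_j)=\frac{1}{2(k-1)}$ whenever $\lambda_i\lambda_j>0$ and $i\ne j$.

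\emph{Step 4: conclude.} Because $Ric$ is parallel, its eigenvalues are constant and its rank is constant. Therefore the soliton is rigid by \cite{FR16}, i.e. a finite quotient of $N\times\mathbb{R}^{n-m}$ with $N^m$ Einstein and $Ric_N=\frac12 g_N$. This gives $R=\frac{m}{2}$, so $m=k$. On $N$, each Ricci eigenvalue $\frac12$ is a sum of $k-1$ sectional curvatures, each at most $\frac{1}{2(k-1)}$. Hence every sectional curvature of $N$ equals $\frac{1}{2(k-1)}$, which also follows from the equality case in Step 3. So $N$ has constant curvature, and being compact, it is a quotient of the round $\mathbb{S}^k$. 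This yields a finite quotient of $\mathbb{S}^k\times\mathbb{R}^{n-k}$.

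\emph{Main obstacle.} The crux is Step 2: we need the correct form and sign of the reaction term $R_{ikjl}R_{kl}$ in $\Delta_f Ric$. We also need to check that the upper sectional bound can be applied termwise, which uses $\lambda_i\lambda_j\ge0$ from $Ric\ge0$. Once this is in place, the constant $\frac{1}{2(k-1)}$ makes the estimate exactly sharp, and the rest is standard.
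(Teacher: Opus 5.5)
Your proof is correct, and it reaches $\nabla Ric\equiv 0$ by a noticeably shorter route than the paper.

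\textbf{The paper's route.} It first applies $\Delta_f$, in the barrier sense, to $\lambda_1+\cdots+\lambda_{n-k}$. Using the sectional bound it obtains $\Delta_f(\lambda_1+\cdots+\lambda_{n-k})\le 2C|Rm|(\lambda_1+\cdots+\lambda_{n-k})+2(\lambda_1^2+\cdots+\lambda_{n-k}^2)$. It then quotes \cite{FR16} for the vanishing of these eigenvalues on $f^{-1}(0)$, and the strong maximum principle gives $\lambda_1=\cdots=\lambda_{n-k}\equiv 0$ and $\lambda_{n-k+1}=\cdots=\lambda_n\equiv\frac12$. Only then does it insert these values into the identity $|\nabla Ric|^2=2\sum_{i\neq j}K_{ij}\lambda_i\lambda_j-|Ric|^2$, which is the one you derive in Step 2. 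It finishes with de Rham's splitting theorem.

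\textbf{Your route.} The key observation is that $\sum_{i\neq j}\lambda_i\lambda_j=R^2-|Ric|^2=\frac{k(k-1)}{4}$ holds pointwise, whatever the eigenvalues are. This is exactly the number the paper computes after it has determined them. So $Ric\ge 0$ together with the sectional bound already forces $|\nabla Ric|^2\le 0$. The barrier computation, the information about the focal set $f^{-1}(0)$, and the maximum principle all become unnecessary.

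\textbf{What the paper's route buys.} In principle, it only ever uses the upper bound on planes spanned by eigenvectors of the $k$ largest Ricci eigenvalues. You use it on every eigen-plane with $\lambda_i\lambda_j>0$. Under the hypothesis as stated, a bound on all sectional curvatures, this difference is immaterial.

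Closing with the constant-rank rigidity theorem of \cite{FR16} instead of de Rham is equally valid. Your trace argument on $N$ correctly upgrades the equality to all planes tangent to $N$, giving constant curvature $\frac{1}{2(k-1)}$.
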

 	
\Rk The above bound $\frac{1}{2(k-1)}$ is sharp, because $S^k$ has constant sectional curvature $\frac{1}{2(k-1)}$.
	
	%

	\smallskip

	The paper is organized as follows.
	In Section \ref{sec2}, we recall the notations and basic formulas on gradient shrinking Ricci solitons with constant scalar curvature.
	In Section \ref{sec3}, we will apply $\Delta_f$ directly to the sum of the smallest $2$ eigenvalues, denoted $\lambda_1$ and $\lambda_2$ and then derive the estimate of $\Delta_f(\lambda_1+\lambda_2)$,  which involves the Weyl curvature of the level set as mentioned before, see Proposition \ref{lap12}.
	
	In Section \ref{sec4}, based on the point-picking argument, we prove the Riemannian curvature is bounded.
	In Section \ref{sec6}, we prove Theorem \ref{main}.
      In Section 6, we prove Theorem 1.2.
	
	\smallskip

	
	\section{Notations and basic formulas on gradient shrinking Ricci solitons}\label{sec2}
	
	In this section, we recall the notations and basic formulas on gradient shrinking Ricci solitons with constant scalar curvature. For details, we refer to \cite{Cao,Hamilton,Petersen-Wylie,Cheng-Zhou}.
	
	\smallskip
	Let  $(M, g)$ be an $n$-dimensional complete gradient shrinking Ricci soliton satisfying \eqref{soliton}.
	By scaling the metric $g$,  one can normalize $\lambda$ so that $\lambda=\frac{1}{2}$. In this paper, we always assume $\lambda=\frac{1}{2}$ and the gradient shrinking Ricci soliton equation is as follows,
	\begin{align}\label{soliton'}
	\text{Ric}+\nabla^2f=\frac{1}{2} g.
	\end{align}
	
	At first we recall some basic formulas which will be used throughout the paper:
	\begin{align}\label{second bianchi}
	d R=2 Ric(\nabla f,\cdot),
	\end{align}
	\begin{align}\label{R}
	R+\Delta f=\frac{n}{2},
	\end{align}\begin{align}\label{f he tidu}
	R+|\nabla f|^2=f,
	\end{align}\begin{align}\label{lr}
	\Delta_f R=R-2|Ric|^2,
	\end{align}\begin{align}\label{elliptic equation}
	\Delta_f R_{ij}=R_{ij}-2 R_{ikjl}R_{kl}.
	\end{align}
	where $\Delta _{f}=\Delta -\left\langle \nabla f,\nabla \right\rangle $ is the weighted Laplacian, and $\Delta _{f}$ acting on the function
	is self-adjoint on the space of square integrable functions with respect to the weighted measure $e^{-f}dv.$ In general, the weighted Laplacian $\Delta_f$ acting on tensors is given by $\Delta_f=\Delta-\nabla_{\nabla f}$.
	
	\smallskip

	Now we consider complete gradient shrinking Ricci solitons with constant scalar curvature $R$. In this case, the potential function $f$ is isoparametric and the isoparametric property plays a very important role.
	Concretely, the potential function $f$ can be renormalized, by replacing $f-R$  with $f$, so that $f:M\to [0, +\infty)$ satisfies
	\begin{equation}\label{iso1}
	|\nabla f|^2=f,
	\end{equation}
	which implies that $f$ is transnormal.
	Recall \eqref{R}
	\begin{align*}\label{R'}
	\Delta f=\frac{n}2-R,
	\end{align*}
	therefore the (nonconstant) renormalized $f$ is an isoparametric function on $M$. From the potential function estimate \eqref{cao-zhou},  $f$ is proper and unbounded.

For later application,  we need to discuss the relation between barrier solution and distribution.

Let $(M,g)$ be a complete manifold, $\nabla$ be the gradient operator on $M$,
$\Delta$ be the Laplacian on $M$, $\Omega$ be an open set of $M$ and
$F\in C^\infty(M)$. The $F$-Laplacian is defined by
\[
\Delta_F:=\Delta-\nabla F\cdot\nabla.
\]
We say that a continuous function $u\in C(\Omega)$ satisfies $\Delta_F u\le w$
for some $w$ \emph{in the barrier sense}, if for any fixed $x\in \Omega$,
there exists a smooth function $v$ defined in a neighborhood $U(x)$ of $x$,
such that $u(x)=v(x)$, $u(y)\le v(y)$ for any $y\in U(x)$ and
\[
\Delta_F v(x)\le w(x).
\]
We say that $u\in C(\Omega)$ satisfies $\Delta_F u\le w$ on $\Omega$
\emph{in the sense of distribution}, if
\[
\int_{\Omega} u\Delta_F\phi\, e^{-F}\leq \int_{\Omega} w \phi\, e^{-F}
\]
for any $\phi\geq 0$ with $\phi\in C_c^\infty(\Omega)$.

In the appendix of \cite{Wu-Wu}, the authors proved the following Theorem.
\begin{theo}[\cite{Wu-Wu}]\label{bard}
If $u\in C(\Omega)$ satisfies $\Delta_F u\le w$ for some $w$
in the barrier sense, then it satisfies $\Delta_F u\le w$ on $\Omega$ in
the sense of distribution.
\end{theo}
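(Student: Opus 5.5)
Since the statement is local in $x$, the plan is to test the inequality against one fixed nonnegative $\phi\in C_c^\infty(\Omega)$. I reduce to a neighborhood of $\mathrm{supp}\,\phi$ and regularize $u$ there. I assume $w$ is continuous (this is how the statement is used later, with $w$ built from curvature quantities). The first step uses the barrier hypothesis to obtain a viscosity-type inequality: at every point $x$ there is a smooth upper barrier $v_x$ touching $u$ from above with $\Delta_F v_x(x)\le w(x)$. Equivalently, $u$ satisfies $\Delta_F u\le w$ in the viscosity sense. No smooth $\varphi$ touching $u$ from below at $x$ can satisfy $\Delta_F\varphi(x)>w(x)$. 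Indeed, $\varphi\le u\le v_x$ near $x$ with equality at $x$ forces $\nabla^2(v_x-\varphi)(x)\ge 0$ and $\nabla(v_x-\varphi)(x)=0$, hence $\Delta_F\varphi(x)\le\Delta_F v_x(x)\le w(x)$.

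The second step passes from viscosity supersolutions to distributional ones. I would cover $\mathrm{supp}\,\phi$ by finitely many coordinate charts using a partition of unity. In each chart I would form the inf-convolution $u_\varepsilon(x)=\inf_y\bigl(u(y)+\frac{1}{2\varepsilon}|x-y|^2\bigr)$, taken in normal coordinates or via the Riemannian distance squared. The function $u_\varepsilon$ is semiconcave, converges to $u$ locally uniformly, and is again a viscosity supersolution of $\Delta_F u_\varepsilon\le w+o(1)$. The error $o(1)$ comes from the continuity of $w$ and of the coefficients of $\Delta_F$, together with the standard Jensen/Ishii doubling estimate in the metric setting. Semiconcave functions are twice differentiable a.e. by Alexandrov's theorem. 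Their distributional Hessian is a matrix-valued measure whose singular part is nonpositive. At points of twice differentiability the viscosity inequality holds pointwise, so the absolutely continuous part satisfies $\Delta_F u_\varepsilon\le w+o(1)$ a.e.

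The third step is to integrate by parts against $\phi e^{-F}$. Since the singular part only helps, this gives $\int u_\varepsilon\Delta_F\phi\,e^{-F}\le\int (w+o(1))\phi e^{-F}$. Letting $\varepsilon\to0$ by uniform convergence on $\mathrm{supp}\,\phi$ then yields the claim.

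The main obstacle is the second step, namely preserving the supersolution property under inf-convolution on a manifold with variable coefficients. An alternative that avoids Alexandrov's theorem is to mollify directly. If $u$ is already semiconcave, as $\lambda_1+\lambda_2$ is locally a minimum of smooth functions in the paper's application, the barriers give a uniform bound $\nabla^2 v_x\le C$. I would then check that $u-\frac{C}{2}|x|^2$ is concave in charts. Mollifying in charts then preserves the inequality up to errors vanishing as the mollification radius tends to $0$.
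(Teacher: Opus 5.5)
The paper gives no proof of this statement; it is quoted from the appendix of \cite{Wu-Wu}, so your proposal has to stand on its own. Your route is barrier $\Rightarrow$ viscosity $\Rightarrow$ distribution, via inf-convolution, Alexandrov's theorem, and the nonpositivity of the singular part of the Hessian of a semiconcave function. This is a standard and workable route, and your Steps~1 and~3 are correct. You have, however, added the hypothesis that $w$ is continuous. That suffices for the application in the proof of Theorem~\ref{main}, where $w=-\frac{1}{n-3}(\lambda_1+\lambda_2)$. It does not cover the merely integrable $w$ of Corollary~\ref{bardis} and Proposition~\ref{keyprop}, because your argument uses $w(y_0)\le w(x_0)+o(1)$.

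The genuine gap is the claim in Step~2 that $u_\varepsilon$ is a viscosity supersolution of $\Delta_F u_\varepsilon\le w+o(1)$. Continuity of the coefficients does not give this, and the doubling estimate you invoke is exactly the part that has to be carried out.

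In a chart write $\Delta_F=a^{ij}\partial_{ij}+b^i\partial_i$. Suppose $\varphi$ touches $u_\varepsilon$ from below at $x_0$, with minimizer $y_0$, and you simply translate $\varphi$ to $y_0$. You get $a^{ij}(y_0)\partial_{ij}\varphi(x_0)+b^i(y_0)\partial_i\varphi(x_0)\le w(y_0)$. Replacing $a(y_0)$ by $a(x_0)$ costs $|a(x_0)-a(y_0)|\,|\nabla^2\varphi(x_0)|$, but semiconcavity bounds $\nabla^2\varphi(x_0)\le\varepsilon^{-1}I$ only from above. So the error is not small pointwise. It is not small after integration either: the only a priori bound on the Hessian mass of $u_\varepsilon$ over $\mathrm{supp}\,\phi$ is $O(\varepsilon^{-1})$, whereas $|x_0-y_0|$ is merely $o(\sqrt{\varepsilon})$.

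The fix is to transfer $\varphi$ through a map that matches the principal coefficients. Take $M=a(y_0)^{1/2}a(x_0)^{-1/2}$ and $T(x)=y_0+M(x-x_0)$. Then $\psi(y)=\varphi(T^{-1}y)-\frac{1}{2\varepsilon}|T^{-1}y-y|^2$ touches $u$ from below at $y_0$, and $M^{-1}a(y_0)M^{-T}=a(x_0)$ makes the second-order terms agree exactly. Use $\nabla\varphi(x_0)=(x_0-y_0)/\varepsilon$ and $|M-I|\le C|x_0-y_0|$. Every remaining error is then $O(|x_0-y_0|^2/\varepsilon)+\omega_w(|x_0-y_0|)$, with $\omega_u,\omega_w$ moduli of continuity. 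This tends to $0$ uniformly because $|x_0-y_0|^2/(2\varepsilon)\le u(x_0)-u(y_0)\le\omega_u(|x_0-y_0|)$. For the Riemannian inf-convolution, parallel transport along the minimizing geodesic plays the role of $M$. So the step rests on Lipschitz dependence of $a^{1/2}$ (the Crandall--Ishii--Lions structure condition), not on continuity.

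Finally, your closing alternative does not actually bypass Alexandrov. Mollifying a semiconcave $u$ controls $\nabla^2 u\le C$, not $\Delta_F u\le w$. Turning the pointwise barrier inequality into a measure inequality still requires either Alexandrov's theorem or an averaging argument (for instance with the heat semigroup of $\Delta_F$) that uses uniform Taylor bounds on the barriers $v_x$.
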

Next we shall apply Theorem \ref{bard} to discuss some special cases for our
theorems in the preceding sections. We use the same notations as before. On
an $n$-dimensional shrinker $(M,g,f)$, let $u$ be an
 Lipschitz function and let $w$ be an integrable function
with respect to measure $e^{-h}dv$. Here $h$ is uniformly equivalent to the square of
distance function. For a sufficiently large
$r$, $a$ and $b$ ($a<b$), set $D(r):=\{x\in M|h(x)\le r\}$,
$\Sigma(r):=\{x\in M|h(x)=r\}$ and $D(a,b):=\{x\in M|a\le h(x)\le b\}$.
Then Theorem \ref{bard} implies that
\begin{coro}[\cite{Wu-Wu}]\label{bardis}
If $\Delta_h u\le w$ holds on $M\setminus D(r)$ in the barrier sense, then it
holds on $M\setminus D(r)$ in the sense of distribution.
\end{coro}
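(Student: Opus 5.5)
The corollary follows directly from Theorem \ref{bard}, applied with $\Omega:=M\setminus D(r)$ and $F:=h$. The only things to check are that the hypotheses of Theorem \ref{bard} hold on this set, and that the two distributional notions coincide there.

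First I will fix the open set. Since $h$ is smooth, $D(r)=\{h\le r\}$ is closed, so $M\setminus D(r)=\{h>r\}$ is open. Because $h$ is uniformly equivalent to the square of the distance function, $D(r)$ is compact for large $r$, and $\Omega$ is an open neighbourhood of infinity. The regularity hypothesis also holds: $u$ is Lipschitz, hence continuous, so $u\in C(\Omega)$.

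Next comes the integrability of $w$. Theorem \ref{bard} only needs the right-hand side $\int_\Omega w\phi\,e^{-h}$ to make sense for every $\phi\ge 0$ in $C_c^\infty(\Omega)$. This is guaranteed because $w$ is integrable with respect to $e^{-h}dv$ and $\phi$ is bounded with compact support. The left-hand side $\int_\Omega u\Delta_h\phi\,e^{-h}$ is finite because $u$ is continuous and $\phi$ has compact support.

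With these checks, Theorem \ref{bard} gives
$$\int_{M\setminus D(r)} u\,\Delta_h\phi\,e^{-h}\le \int_{M\setminus D(r)} w\,\phi\,e^{-h}\qquad\text{for all }\phi\ge 0,\ \phi\in C_c^\infty(M\setminus D(r)).$$
This is exactly the distributional statement on $M\setminus D(r)$. The only point needing care, rather than a real obstacle, is that the barrier property is local: it must hold at each point of the open set $\{h>r\}$, and the test functions must have support strictly inside this set, away from the boundary $\Sigma(r)$. Both are built into the definitions. If one instead wants the closed region including $\Sigma(r)$, I would replace $r$ by a slightly larger value, which is harmless since $r$ is arbitrary and large.
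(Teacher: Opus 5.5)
Your proposal is correct and is exactly the paper's argument: the corollary follows by applying Theorem \ref{bard} with $\Omega=M\setminus D(r)=\{h>r\}$ and $F=h$. Your checks (openness of $\Omega$, continuity of $u$, finiteness of both integrals against $\phi\in C_c^\infty(\Omega)$) are the routine verifications the paper leaves implicit, and you share with the paper the tacit assumption that $h$ is smooth on a neighbourhood of $\{h>r\}$.
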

Using Corollary \ref{bardis}, we have the following useful proposition.
\begin{prop}[\cite{Wu-Wu}]\label{keyprop}
If a continuous and Lipschitz function $u$ satisfies $\Delta_h u\le w$
on $M\setminus D(r)$ in the barrier sense, then
\begin{align*}
-\int_{\Sigma(r)}\langle\nabla u, \tfrac{\nabla h}{|\nabla h|}\rangle e^{-h}\le \int_{M\setminus D(r)} w\, e^{-h}
\end{align*}
holds for almost everywhere sufficiently large $r$.
\end{prop}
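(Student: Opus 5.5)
The plan is to derive the boundary inequality from Corollary \ref{bardis} by testing the distributional inequality against cutoff functions of $h$ and passing to a limit. By Corollary \ref{bardis}, $\Delta_h u\le w$ holds on $M\setminus D(r_0)$ in the sense of distribution for $r_0$ large. That is, for every $0\le\phi\in C_c^\infty(M\setminus D(r_0))$,
\[
\int u\,\Delta_h\phi\, e^{-h}\le \int w\phi\, e^{-h}.
\]
Since $u$ is Lipschitz, $\nabla u\in L^\infty_{loc}$. Integrating by parts against the weighted measure is therefore legitimate, with no boundary term because $\phi$ has compact support. This gives
\[
-\int \langle\nabla u,\nabla\phi\rangle e^{-h}\le\int w\phi e^{-h}.
\]
By density this extends to nonnegative Lipschitz $\phi$ with compact support in $M\setminus D(r_0)$.

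Next choose $\phi$ depending only on $h$. Fix $r>r_0$, $R>r$ and small $\epsilon>0$. Let $\phi=\psi_\epsilon(h)\eta_R(h)$, where $\psi_\epsilon$ rises linearly from $0$ to $1$ on $[r,r+\epsilon]$, and $\eta_R$ equals $1$ on $[0,R]$ and decays linearly to $0$ on $[R,2R]$. Since $h$ is uniformly equivalent to $d^2$, it is proper, so $\phi$ has compact support. On the shell $D(r,r+\epsilon)$ we have $\nabla\phi=\epsilon^{-1}\nabla h$, so the left side contains
\[
-\frac1\epsilon\int_{D(r,r+\epsilon)}\langle\nabla u,\nabla h\rangle e^{-h}.
\]
By the coarea formula this equals
\[
-\frac1\epsilon\int_r^{r+\epsilon}\int_{\Sigma(s)}\Big\langle\nabla u,\frac{\nabla h}{|\nabla h|}\Big\rangle e^{-h}\,ds.
\]
The remaining term, $-\int\langle\nabla u,\nabla\eta_R\rangle\psi_\epsilon e^{-h}$, is bounded by $C R^{-1}\int_{D(R,2R)}|\nabla u||\nabla h|e^{-h}$.

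Then pass to the limits. The function $s\mapsto\int_{\Sigma(s)}\langle\nabla u,\nabla h/|\nabla h|\rangle e^{-h}$ is $L^1_{loc}$ by coarea. By the Lebesgue differentiation theorem, the $\epsilon$-average converges to its value at $s=r$ for almost every $r$. This is exactly why the statement says ``for almost every $r$'', and it also sidesteps any possible critical values of $h$, which have measure zero by Sard or are simply excluded. On the right, $w\phi\to w\,\chi_{M\setminus D(r)}\eta_R$ is dominated by $|w|$, which is integrable against $e^{-h}$, so dominated convergence applies.

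Finally let $R\to\infty$. This is the one point needing care. The cutoff error vanishes provided $|\nabla u||\nabla h|e^{-h}$ is integrable at infinity. Here $u$ Lipschitz gives $|\nabla u|\le C$, and $|\nabla h|\le C\sqrt h\le C(d+1)$. Volume growth of a shrinker is at most polynomial, by the Cao--Zhou estimate already used in the paper, so it is beaten by $e^{-h}\sim e^{-cd^2}$. The error term is therefore $O(R^{-1})\to0$, and the right side tends to $\int_{M\setminus D(r)}w e^{-h}$, which yields the claimed inequality. I expect the main obstacle to be the almost-everywhere limit $\epsilon\to0$ for a merely Lipschitz $u$. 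The approach there is to rely on the coarea formula and Lebesgue points rather than pointwise traces of $\nabla u$ on $\Sigma(r)$.
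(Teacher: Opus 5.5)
Your proposal is correct and follows the route the paper indicates. The paper gives no details beyond citing \cite{Wu-Wu} and deriving the proposition from Corollary \ref{bardis} (barrier $\Rightarrow$ distributional inequality), and your cutoff-in-$h$ testing, with coarea and Lebesgue points giving the flux through $\Sigma(r)$ for a.e.\ $r$ and the weight $e^{-h}$ killing the outer cutoff term, is the natural way to fill that in. One small point: the bound $|\nabla h|\le C\sqrt h$ is not among the stated hypotheses on $h$, but it holds for the $h=f-\frac{2}{n-3}\log f$ actually used, since $|\nabla f|^2=f$.
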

\Rk. During the paper,  $\int_{M\setminus D(a)} \Delta_h u\cdot e^{-h}dvol$ is understood as $-\int_{\Sigma(a)}\langle \nabla u, \frac{\nabla h}{|\nabla h|}\rangle e^{-h}$, for more details, see \cite{Wu-Wu}.

	%
	\section{Estimate on the sum of the smallest two Ricci-eigenvalues of weighted Laplacian operator}\label{sec3}
	In this section, let $(M, g, f)$ be an $n$-dimensional complete noncompact gradient shrinking Ricci soliton satisfying \eqref{soliton'} with constant scalar curvature $R=\frac{n-2}{2}$. We will apply $\Delta_f$ directly to the sum of the smallest $2$ Ricci-eigenvalues, denoted $\lambda_1$ and $\lambda_2$, and derive the estimate of $\Delta_f(\lambda_1+\lambda_2)$ involving the Weyl curvature of the level set, see Propositon \ref{lap12}.
	
	First, we recall the non-negativity of Ricci curvature for $n$-dimensional gradient shrinking Ricci soliton with constant scalar curvature $R=\frac{n-2}{2}$, also see \cite{FR16}.
	\begin{prop}[\cite{FR16}] \label{positive}
		Let $(M, g, f)$ be an $n$-dimensional complete noncompact gradient shrinking Ricci soliton satisfying \eqref{soliton'} with constant scalar curvature $R=\frac{n-2}{2}$. Then it has nonnegative Ricci curvature, the smallest Ricci-eigenvalue $\lambda_1=0$ and $Ric(\nabla f,\cdot)=0$.
	\end{prop}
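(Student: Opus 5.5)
Since $R$ is constant, \eqref{second bianchi} gives at once $Ric(\nabla f,\cdot)=\tfrac12 dR=0$. Thus wherever $\nabla f\neq 0$ the vector $\nabla f$ lies in the kernel of $Ric$. Because $f$ is nonconstant with $|\nabla f|^2=f$, the set $\{\nabla f\neq0\}=\{f>0\}$ is open and dense. Indeed, $\{f=0\}$ is the minimal focal variety of the isoparametric function, and it has empty interior: on an open set where $f\equiv 0$, analyticity of solitons would force $f\equiv0$ everywhere. So on a dense set Ricci has a zero eigenvalue. Once nonnegativity is established, this gives $\lambda_1=0$ on a dense set, and continuity of $\lambda_1$ extends it to all of $M$.

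The main task is therefore $Ric\ge 0$, i.e. $\lambda_1\ge0$. I would plug $R=\frac{n-2}{2}$ into \eqref{lr}. Since $\Delta_f R=0$, this gives $|Ric|^2=\frac R2=\frac{n-2}{4}$. On $\{f>0\}$, diagonalize $Ric$ with $e_n=\nabla f/|\nabla f|$ as an eigenvector of eigenvalue $0$. The remaining eigenvalues $\mu_1,\dots,\mu_{n-1}$ then satisfy
\begin{align*}
\sum_{i=1}^{n-1}\mu_i=\frac{n-2}{2},\qquad \sum_{i=1}^{n-1}\mu_i^2=\frac{n-2}{4}.
\end{align*}
These identities alone do not force $\mu_i\ge0$. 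For instance, Cauchy--Schwarz only gives $\frac{(n-2)^2}{4}\le (n-1)\frac{n-2}{4}$, which is consistent but not sharp. So I need more structure, namely the isoparametric geometry of the level sets.

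On the regular level set $\Sigma_t=\{f=t\}$ the second fundamental form is $A=\nabla^2 f/|\nabla f|=(\frac12 g-Ric)/\sqrt f$ restricted to $T\Sigma$. Since $f$ is isoparametric, the results of Wang (isoparametric functions on Riemannian manifolds) show that the level sets have constant mean curvature. Using the Riccati equation along the normal geodesics (integral curves of $e_n$), I get $\nabla_{e_n}A+A^2=-R(\cdot,e_n,e_n,\cdot)$. Tracing this and using $Ric(e_n,e_n)=0$ should give $|A|^2$ as a function of $t$ alone. The quantity $|\nabla^2 f|^2 = |\frac12 g-Ric|^2=\frac n4-R+|Ric|^2$ is indeed constant, which fits this picture. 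Next I would run the approach of Fernández-López and García-Río \cite{FR16}. They use the ODE $\frac{d}{dt}$ for the eigenvalues of $\nabla^2 f$ along the flow of $\nabla f/|\nabla f|^2$, combined with \eqref{elliptic equation}, to show that the eigenvalues of $\nabla^2 f$ (equivalently of $Ric$) are constant along each flow line. They are then forced into the set $\{0,\frac12\}$, because a bounded solution of the Riccati equation $\dot a=-a^2/f$-type, defined for all large $f$, must stay in that set. Concretely, along a unit-speed normal geodesic $\gamma(s)$ with $f\sim s^2/4$, I would show that each principal curvature $\kappa$ satisfies a Riccati inequality. Completeness (no focal points at infinity) then forces $\frac12-\mu_i\le \frac12$, i.e. $\mu_i\ge0$.

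The expected obstacle is exactly this last step: controlling the sign of the eigenvalues through the Riccati/ODE argument without assuming curvature bounds. If it proves difficult, I would simply invoke \cite{FR16}, whose rank-constancy analysis yields $Ric\ge0$ when $R=(n-2)\lambda$, as the paper itself cites. Then I would conclude with $\lambda_1=0$ and $Ric(\nabla f,\cdot)=0$ as in the first paragraph.
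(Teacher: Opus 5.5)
Your fallback is the paper's proof: $Ric(\nabla f,\cdot)=\frac12 dR=0$ from \eqref{second bianchi}, plus citing \cite{FR16} for $Ric\ge0$. Your density and continuity step for $\lambda_1=0$ on $f^{-1}(0)$ is a detail the paper skips. It does not even need analyticity: if $f\equiv0$ on an open set, then $\nabla^2 f=0$ there, so $Ric=\frac12 g$ and $R=\frac n2\neq\frac{n-2}{2}$. The problem is the self-contained part. The ``main obstacle'' you name is not real, and the route you propose around it would fail.

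Your two identities \emph{do} force $\mu_i\ge0$; Cauchy--Schwarz is just the wrong inequality. Complete the square around $\frac12$:
\[
\sum_{i=1}^{n-1}\Bigl(\frac12-\mu_i\Bigr)^2=\frac{n-1}{4}-\frac{n-2}{2}+\frac{n-2}{4}=\frac14 .
\]
Hence $|\frac12-\mu_i|\le\frac12$, i.e. $0\le\mu_i\le 1$, on $\{f>0\}$, and $Ric\ge0$ everywhere by density and continuity. This is the same computation the paper uses in \eqref{la} and in $|A|^2=\frac{1}{4f}$. You already had it in hand: $|\nabla^2 f|^2=\frac n4-R+|Ric|^2=\frac12$. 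Removing the normal contribution $\nabla^2 f(\nu,\nu)^2=\frac14$, with $\nu=\nabla f/|\nabla f|$, leaves exactly this sum.

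The Riccati sketch cannot work as described. Suppose the Ricci eigenvalues were constant along the flow lines and confined to $\{0,\frac12\}$. Then $Ric$ would have constant rank $n-2$, which by \cite{FR16} already gives rigidity. That is the conclusion of Theorem \ref{main}, which the paper reaches only under the extra Weyl hypothesis; it is not a lemma on the way to $Ric\ge0$.

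Eigenvalues in $\{0,\frac12\}$ are forced only on the focal set $f^{-1}(0)$. There, differentiating $|\nabla f|^2=f$ twice at a critical point gives $2(\nabla^2 f)^2=\nabla^2 f$. Off the focal set there is no closed ODE for the eigenvalues. By \eqref{k1a}, $(\nabla_{\nabla f}Ric)(e_\alpha,e_\alpha)=fK_{1\alpha}-\lambda_\alpha(\frac12-\lambda_\alpha)$, which involves the uncontrolled radial curvature $K_{1\alpha}$. So neither a Riccati inequality nor a ``bounded solutions stay in $\{0,\frac12\}$'' argument is available. Drop that route and use the algebraic identity above.
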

	\begin{proof}
		Let $(M, g, f)$ be an $n$-dimensional complete noncompact gradient shrinking Ricci soliton satisfying \eqref{soliton'} with constant scalar curvature $R=\frac{n-2}{2}$, the authors proved that  the Ricci curvature is non-negative in \cite{FR16}.
		\smallskip
		\eqref{second bianchi} impiles $Ric(\nabla f,\cdot)=0$ since the scalar curvature is constant.
		Hence $\lambda_1=0$ is the smallest Ricci-eigenvalue with Ricci-eigenvector $\nabla f$.
		
	\end{proof}
	
	From Proposition \ref{positive}, throughout this paper we always denote the eigenvalues of Ricci curvature by
	\[
	0=\lambda_1\leq \lambda_2\leq  \lambda_3 \leq  \lambda_4\leq\cdot\cdot\cdot \leq \lambda_n.
	\]
	
	\smallskip
	In the following, we calculate $\Delta_f(\lambda_1+\lambda_2)$ in the barrier sense, and have the following lemma.
	\begin{lem} Let $(M^n, g, f)$ be an $n$-dimensional shrinking gradient Ricci soliton with constant scalar curvature $\frac{n-2}{2}$.  	
		Then
		\begin{equation}\label{l12}
		\Delta_f(\lambda_1+\lambda_2)\leq(\lambda_1+\lambda_2)-2\sum_{\alpha=2}^n K_{1\alpha}\lambda_\alpha-2\sum_{\alpha=3}^n K_{2\alpha}\lambda_\alpha
		\end{equation}	
		in the sense of barrier, where $K_{ij}$ denotes the sectional curvature of the plane spanned by $e_i$ and $e_j$, and $\{e_i\}_{i=1}^n$ are the orthonormal eigenvectors corresponding to the Ricci-eigenvalues $ \{\lambda_i\}_{i=1}^n$.
	\end{lem}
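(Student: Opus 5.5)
Fix a point $x_0$ and an orthonormal frame $\{e_i\}$ of Ricci-eigenvectors at $x_0$ with $Ric(e_i)=\lambda_i e_i$. The plan is to build a smooth upper barrier for $\lambda_1+\lambda_2$ from this frame and compute its $\Delta_f$ using \eqref{elliptic equation}.

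First, extend the frame to a neighborhood $U$ of $x_0$ by parallel transport along radial geodesics. This gives $\nabla e_i(x_0)=0$ and $\nabla^2 e_i(x_0)=0$ after symmetrization; in particular $\Delta e_i(x_0)=0$ and $\nabla_{\nabla f}e_i(x_0)=0$. Then define
\[
v:=Ric(e_1,e_1)+Ric(e_2,e_2)
\]
on $U$. By the Ky Fan (Rayleigh--Ritz) characterization, the sum of the two smallest eigenvalues of a symmetric form is the minimum of its trace over orthonormal 2-frames. Hence $\lambda_1+\lambda_2\le v$ on $U$, with equality at $x_0$, so $v$ is an upper barrier.

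Next, compute $\Delta_f v(x_0)$. Since the first and second covariant derivatives of the $e_i$ vanish at $x_0$ (only the Laplacian is needed), we get
\[
\Delta_f v(x_0)=(\Delta_f Ric)(e_1,e_1)+(\Delta_f Ric)(e_2,e_2).
\]
By \eqref{elliptic equation} this equals
\[
\lambda_1+\lambda_2-2\sum_{a=1}^{2}\sum_{k,l}R_{akal}R_{kl}.
\]
In the eigenframe $R_{kl}=\lambda_k\delta_{kl}$, so $\sum_{k,l}R_{akal}R_{kl}=\sum_k R_{akak}\lambda_k=\sum_{k\ne a}K_{ak}\lambda_k$. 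Therefore
\[
\Delta_f v(x_0)=(\lambda_1+\lambda_2)-2\sum_{\alpha\neq1}K_{1\alpha}\lambda_\alpha-2\sum_{\alpha\neq2}K_{2\alpha}\lambda_\alpha .
\]

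Finally, reconcile this with \eqref{l12}. The second sum contains the term $\alpha=1$, namely $K_{21}\lambda_1$, and by Proposition \ref{positive} we have $\lambda_1=0$, so this term vanishes. The two sums then reduce to exactly those in \eqref{l12}; this is why the first sum runs over $\alpha=2,\dots,n$ and the second over $\alpha=3,\dots,n$. Consequently $\Delta_f v(x_0)$ equals the right-hand side of \eqref{l12}, and since $v$ touches $\lambda_1+\lambda_2$ from above at $x_0$, the inequality holds in the barrier sense.

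The step needing the most care is the barrier construction when eigenvalues are repeated, for instance $\lambda_2=\lambda_3$, where the eigenvectors are not unique. The Ky Fan inequality works for any choice of orthonormal eigenvectors at $x_0$, so this poses no real problem. Smoothness of $v$ is also fine: $v$ is smooth because the frame and $Ric$ are smooth, and the eigenvalues themselves never need to be differentiated.
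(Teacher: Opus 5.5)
Your proposal is correct and is essentially the paper's proof: parallel-transport an orthonormal Ricci eigenframe from the point, use $Ric(e_1,e_1)+Ric(e_2,e_2)$ as a smooth upper barrier for $\lambda_1+\lambda_2$, compute its $\Delta_f$ at the point via \eqref{elliptic equation}, and drop the $K_{21}\lambda_1$ term because $\lambda_1=0$. The paper additionally fixes $e_1=\nabla f/|\nabla f|$ (allowed since $Ric(\nabla f,\cdot)=0$); this makes no difference for this lemma but is the choice the later formula for $K_{1\alpha}$ relies on. You, for your part, spell out the Ky Fan inequality and the vanishing of the frame-derivative terms, which the paper leaves implicit.
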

	
	\begin{proof}
		Actually, at $x$, because $R=\frac{n-2}{2}$, $Ric(\nabla f)=0$, so we choose $e_1=\frac{\nabla f}{|\nabla f|}$, then extend $e_1$ to an orthonormal basis $\{e_1, e_2, e_3,\cdot\cdot\cdot, e_n\}$ such that $\{e_i\}_{i=1}^n$ are the eigenvectors of $Ric(x)$ corresponding to eigenvalues $\{\lambda_i\}_{i=1}^n$. Take parallel transport of $\{e_i\}_{i=1}^n$ along all the geodesics from $x$, then in a neighborhood $B(x, \delta)$ we get a smooth function $u(y)=Ric(y)(e_1(y), e_1(y))+Ric(y)(e_2(y), e_2(y))$ satisfying $u(y)\geq \lambda_1(y)+\lambda_2(y)$ and  $u(x)= \lambda_1(x)+\lambda_2(x)$.
		Thus, at $x$,
	 \begin{equation*}
		\begin{aligned}
			\Delta_f u(x)
			=&\Delta_f\left(Ric(y)(e_1(y), e_1(y))+Ric(y)(e_2(y), e_2(y)) \right)|_{y=x}\\
			=&(\Delta_f Ric)(e_1, e_1)(x)+(\Delta_f Ric)(e_2, e_2)(x)\\
			=&(\lambda_1+\lambda_2)-2(\sum_{i=1}^n K_{1i}\lambda_i +\sum_{i=1}^n K_{2i}\lambda_i)\\
			=&(\lambda_1+\lambda_2)-2\sum_{\alpha=2}^n K_{1\alpha}\lambda_\alpha
			-2\sum_{\alpha=3}^n K_{2\alpha}\lambda_\alpha,
		\end{aligned}
	\end{equation*}
	this completes the proof.
\end{proof}
	
	Next, we deal with the term $\sum_{\alpha=2}^n K_{1\alpha}\lambda_\alpha$. First we will give the following lemma for preparation.
	
	\begin{lem} Let $(M^n, g, f)$ be an $n$-dimensional shrinking gradient Ricci soliton with constant scalar curvature $\frac{n-2}{2}$. Then
		\begin{equation}\label{k1a}
		K_{1\alpha}=\frac{\nabla_{\nabla f}R_{\alpha\alpha} +\lambda_\alpha(\frac{1}{2}-\lambda_\alpha)}{f}
		\end{equation}	
		for $\alpha=2,  3, \cdot\cdot\cdot, n$.	
	\end{lem}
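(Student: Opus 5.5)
We prove the identity by computing $\nabla_{\nabla f}Ric$ with the commutation formula for third derivatives of $f$, using the soliton equation \eqref{soliton'} and the fact $Ric(\nabla f,\cdot)=0$ from Proposition \ref{positive}.

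\textbf{Commutation identity.} Differentiating \eqref{soliton'} gives $\nabla_k R_{ij}=-\nabla_k\nabla_i\nabla_j f$. Commuting covariant derivatives,
\[
\nabla_k\nabla_i\nabla_j f-\nabla_i\nabla_k\nabla_j f = -R_{kijl}\nabla_l f
\]
(up to the sign convention for $Rm$). Hence
\[
\nabla_i R_{kj}-\nabla_k R_{ij}=R_{kijl}\nabla_l f.
\]
Contracting with $\nabla_k f$ yields
\[
\nabla_{\nabla f}R_{ij}=\nabla_k f\,\nabla_i R_{kj}-R_{kijl}\nabla_kf\nabla_lf .
\]

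\textbf{Evaluating the two terms.} Fix $\alpha\ge 2$ and take $i=j=\alpha$ in the eigenframe at $x$, with $e_1=\nabla f/|\nabla f|$.

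For the first term, differentiate the identity $R_{kj}\nabla_k f=0$ in direction $e_i$:
\[
\nabla_k f\,\nabla_iR_{kj}=-R_{kj}\nabla_i\nabla_k f=-R_{kj}\left(\tfrac12 g_{ik}-R_{ik}\right).
\]
At $i=j=\alpha$ this equals $-\lambda_\alpha\left(\tfrac12-\lambda_\alpha\right)$.

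For the second term, $R_{k\alpha\alpha l}\nabla_kf\nabla_lf=\pm|\nabla f|^2K_{1\alpha}$. By \eqref{iso1}, $|\nabla f|^2=f$, and with the convention in which $K_{1\alpha}$ is positive on spheres this term is $f K_{1\alpha}$ with the appropriate sign.

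\textbf{Conclusion.} Putting these together gives
\[
\nabla_{\nabla f}R_{\alpha\alpha}=-\lambda_\alpha\left(\tfrac12-\lambda_\alpha\right)+fK_{1\alpha}.
\]
Since $f>0$ wherever $\nabla f\neq 0$, dividing by $f$ gives \eqref{k1a}.

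\textbf{Main obstacle: sign bookkeeping.} The delicate point is fixing the curvature sign convention so that the final signs match \eqref{k1a}. I would check it against the model $\mathbb{R}^2\times\mathbb{S}^{n-2}$ with $f=|x|^2/4$:
\begin{itemize}
\item for $\alpha$ tangent to the sphere, $\lambda_\alpha=\tfrac12$ and $K_{1\alpha}=0$, so both sides vanish;
\item for $\alpha$ in the flat direction, $\lambda_\alpha=0$ and both sides vanish as well.
\end{itemize}
These cases only confirm consistency, not the sign. To pin the sign down directly, I would use Proposition \ref{positive}: $\nabla^2 f(e_1,e_1)=\tfrac12$ and $\nabla_{e_1}e_1=0$, so the integral curves of $e_1$ are geodesics. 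Then the Riccati equation for the shape operator of the level sets of $f$ gives the relation $\nabla_{\nabla f}(\nabla^2 f)+(\nabla^2f)^2+Rm(\cdot,\nabla f,\nabla f,\cdot)=\ldots$, which fixes the sign of the $K_{1\alpha}$ term.

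Finally, $\nabla_{\nabla f}R_{\alpha\alpha}$ here means $(\nabla_{\nabla f}Ric)(e_\alpha,e_\alpha)$, the tensor derivative evaluated on the eigenvector $e_\alpha$. This is the quantity the computation above produces.
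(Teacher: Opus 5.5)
Your proposal is correct and is essentially the paper's proof: you use the Ricci identity for $\nabla^3 f$ contracted with $\nabla f$, differentiate $Ric(\nabla f,\cdot)=0$ to get $-\lambda_\alpha(\frac12-\lambda_\alpha)$, and use $|\nabla f|^2=f$ (the paper does this for a general pair $\alpha,\beta$ and then sets $\beta=\alpha$). The sign you leave open needs no Riccati argument, only a fixed convention: with $R_{kijl}=\langle R(e_k,e_i)e_j,e_l\rangle$ your first display holds as written, which gives $\nabla_iR_{kj}-\nabla_kR_{ij}=-R_{kijl}\nabla_lf$ (your second display has the opposite sign, i.e.\ it silently switches convention) together with $R_{k\alpha\alpha l}\nabla_kf\nabla_lf=fK_{1\alpha}$, hence $\nabla_{\nabla f}R_{\alpha\alpha}=fK_{1\alpha}-\lambda_\alpha(\frac12-\lambda_\alpha)$, exactly as you concluded.
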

	
	\begin{proof}
		From the Ricci identity, we have
		\begin{equation*}
		\begin{aligned}
		&-R(\nabla f, e_\alpha, \nabla f, e_\beta) \\
		=&-\left( \nabla_\beta f_{\alpha k} - \nabla_kf_{\alpha \beta}\right) f_{k}\\
		=&\left( \nabla_\beta R_{\alpha k} - \nabla_kR_{\alpha \beta}\right) f_{k}\\
		=&-  \nabla_{\nabla f} R_{\alpha \beta}+\nabla_{\beta}(R_{\alpha k} f_k)- R_{\alpha k}f_{ k\beta}\\
		=&-  \nabla_{\nabla f} R_{\alpha \beta}- R_{\alpha k}\left( \frac{1}{2}g_{ k\beta}-R_{ k\beta} \right) \\
		=&- \nabla_{\nabla f} R_{\alpha \beta}-\left( \frac{1}{2}R_{\alpha \beta}-\sum_{k=1}^nR_{\alpha k}R_{k\beta}\right),
		\end{aligned}
		\end{equation*}
		where \eqref{second bianchi} was used in the third equality. Therefore, we see
		\begin{equation}\label{R1a1b}
		R(e_1, e_\alpha, e_1, e_\beta)=\frac{ \nabla_{\nabla f} R_{\alpha \beta}+\left( \frac{1}{2}R_{\alpha \beta}-\sum_{k=1}^n R_{\alpha k}R_{k\beta}\right) }{f}
		\end{equation}
		due to $|\nabla f|^2=f$. \eqref{k1a} holds by setting $\beta=\alpha$ in \eqref{R1a1b}.
		This completes the proof of the lemma.
	\end{proof}
	
	\begin{lem}\label{lek1a}
		Let $(M^n, g, f)$ be an  $n$-dimensional shrinking gradient Ricci soliton with constant scalar curvature $\frac{n-2}{2}$. Then we have
		\ban
		 -\sum_{\alpha=2}^nK_{1\alpha}\lambda_\alpha=-\frac{1}{f}\sum_{\alpha=2}^n\lambda_\alpha^2(\frac{1}{2}-\lambda_\alpha)=\frac{1}{f}\sum_{\alpha=2}^n(\lambda_\alpha-\frac{1}{2})^2\lambda_\alpha.
		\ean
	\end{lem}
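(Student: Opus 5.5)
The plan is to combine \eqref{k1a} with the fact that, for constant scalar curvature, \eqref{lr} forces $|Ric|^2$ to be constant. That constancy will both kill the derivative terms and give the second equality.

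\textbf{Step 1: plug in \eqref{k1a}.} Multiplying \eqref{k1a} by $\lambda_\alpha$ and summing gives
\begin{equation*}
\sum_{\alpha=2}^n K_{1\alpha}\lambda_\alpha=\frac{1}{f}\sum_{\alpha=2}^n \lambda_\alpha\nabla_{\nabla f}R_{\alpha\alpha}+\frac{1}{f}\sum_{\alpha=2}^n\lambda_\alpha^2\Big(\frac12-\lambda_\alpha\Big).
\end{equation*}
So the first equality reduces to showing that $\sum_{\alpha=2}^n\lambda_\alpha\nabla_{\nabla f}R_{\alpha\alpha}=0$ at the point in question.

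\textbf{Step 2: the derivative terms vanish.} Since $R\equiv\frac{n-2}{2}$ is constant, $\Delta_f R=0$, and \eqref{lr} gives $|Ric|^2=\frac{R}{2}=\frac{n-2}{4}$ on all of $M$. Hence $\nabla_{\nabla f}|Ric|^2=0$. On the other hand, in the orthonormal eigenframe $\{e_i\}$ at $x$ we have
\begin{equation*}
\nabla_{\nabla f}|Ric|^2=2\sum_{i,j}R_{ij}\nabla_{\nabla f}R_{ij}=2\sum_{i=1}^n\lambda_i\nabla_{\nabla f}R_{ii}.
\end{equation*}
The $i=1$ term drops because $\lambda_1=0$ by Proposition \ref{positive}. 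Therefore $\sum_{\alpha=2}^n\lambda_\alpha\nabla_{\nabla f}R_{\alpha\alpha}=0$. Inserting this into Step 1, with the minus sign, gives the first equality.

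\textbf{Step 3: the second equality.} This is algebraic. We have $(\lambda-\frac12)^2\lambda=\lambda^3-\lambda^2+\frac{\lambda}{4}$ and $-\lambda^2(\frac12-\lambda)=\lambda^3-\frac{\lambda^2}{2}$. Summing over $\alpha\ge2$, the difference between the two sides is
\begin{equation*}
-\frac12\sum_{\alpha}\lambda_\alpha^2+\frac14\sum_\alpha\lambda_\alpha=-\frac12|Ric|^2+\frac14 R.
\end{equation*}
Here again $\lambda_1=0$, so summing from $\alpha=2$ gives the full quantities $|Ric|^2$ and $R$. This expression vanishes by the identity $|Ric|^2=R/2$ from Step 2.

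\textbf{Main obstacle.} The only non-routine point is recognizing that the derivative terms in \eqref{k1a} assemble into $\frac12\nabla_{\nabla f}|Ric|^2$. One should also check that evaluating in an eigenframe at a single point is legitimate, which it is because $|Ric|^2$ is a frame-independent smooth function.
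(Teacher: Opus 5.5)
Your proposal is correct and follows the paper's route. You substitute \eqref{k1a}, kill the derivative terms because $|Ric|^2=\frac{R}{2}$ is constant, and get the second equality from $\sum_{\alpha=2}^n\lambda_\alpha(\frac12-\lambda_\alpha)=0$. Your tensorial handling via $\nabla_{\nabla f}|Ric|^2=2\sum_i\lambda_i(\nabla_{\nabla f}Ric)_{ii}$ is slightly cleaner than the paper's version, which writes these terms as $\nabla f\cdot\nabla\lambda_\alpha$ and so tacitly treats the eigenvalues as differentiable.
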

	
	\begin{proof}
		First, since the scalar curvature is constant, \eqref{lr} implies
		\ban
		|Ric|^2=\frac{R}{2}=\frac{n-2}{4}.
		\ean
		This means $\sum_{\alpha=2}^n\lambda_\alpha=\lambda_2+\lambda_3+\cdots+\lambda_n=\frac{n-2}{2}$ and $\sum_{\alpha=2}^n\lambda_\alpha^2=\lambda_2^2+\lambda_3^2+\cdots+\lambda_n^2=\frac{n-2}{4}$. Hence
		\ban
		\sum_{\alpha=2}^n\lambda_\alpha(\frac{1}{2}-\lambda_\alpha)=0.
		\ean
		Recall (\ref{k1a}), and we obtain
		\ban
		&&-\sum_{\alpha=2}^nK_{1\alpha}\lambda_\alpha\\
		&=&-\frac{1}{f}\sum_{\alpha=2}^n
		\left[ \nabla f\cdot \nabla \lambda_\alpha +\lambda_\alpha(\frac{1}{2}-\lambda_\alpha)\right]\lambda_\alpha\\
		&=&-\frac{1}{f}\left[ \frac{1}{2} \nabla f\cdot \sum_{\alpha=2}^n\lambda_\alpha^2
		+\sum_{\alpha=2}^n\lambda_\alpha^2(\frac{1}{2}-\lambda_\alpha) \right] \\
		&=&-\frac{1}{f}\sum_{\alpha=2}^n\lambda_\alpha^2(\frac{1}{2}-\lambda_\alpha) \\
		&=&-\frac{1}{f}\sum_{\alpha=2}^n\left[- \lambda_\alpha(\frac{1}{2}-\lambda_\alpha)^2+\frac{1}{2}\lambda_\alpha(\frac{1}{2}-\lambda_\alpha) \right] \\
		&=&\frac{1}{f}\sum_{\alpha=2}^n \lambda_\alpha(\frac{1}{2}-\lambda_\alpha)^2.
		\ean
		We have completed the proof of this Lemma.
		
	\end{proof}

	Subsequently, we consider the level set $\Sigma$ of the potential function $f$ to handle the term $-2\sum_{\alpha=3}^n K_{2\alpha}\lambda_\alpha$. For this purpose,
	recall that the intrinsic curvature tensor $R^{\Sigma}_{\alpha \beta \gamma \eta }$ and the extrinsic curvature tensor $R_{\alpha \beta \gamma \eta }$ of ${\Sigma}$ where $\{\alpha,\beta,\gamma,\eta\}\in\{2,3,\cdot\cdot\cdot, n\}$, are related by the Gauss equation:
	\ban
	R^{\Sigma}_{\alpha \beta \gamma \eta }=R_{\alpha \beta \gamma \eta }+h_{\alpha \gamma}h_{\beta \eta}-h_{\alpha \eta }h_{\beta \gamma},
	\ean
	where $h_{\alpha \beta }$ denotes the components of the second fundamental form $A$ of ${\Sigma}$. Moreover,
	
	\smallskip
	\textbf{Claim}
	\ba\label{rsigma}
	R^{\Sigma}=R=\frac{n-2}{2};
	\ea
	\ba\label{ricsigma}
	Ric^{\Sigma}=Ric+\frac{\nabla_{\nabla f} Ric}{f};
	\ea
	\ba\label{ricsigma'}
	|Ric^{\Sigma}|^2=|Ric|^2+\frac{|\nabla_{\nabla f} Ric|^2}{f^2}.
	\ea	
	
	\begin{equation}\label{kab}
	\begin{aligned}
	K_{\alpha \beta }=&{\frac{1}{n-3}}(\lambda_{\alpha}+\lambda_{\beta})-{\frac{1}{2(n-3)}}-{\frac{1}{n-3}}(K_{1\alpha }+K_{1\beta })+{\frac{1}{2(n-3)f}}[1-(\lambda_{\alpha }+\lambda_{\beta})] \\
	&- {\frac{1}{(n-3)f}}\left[ ({\frac{1}{2}}-\lambda_{\alpha})^2+({\frac{1}{2}}-\lambda_{\beta})^2\right]-{\frac{1}{f}}({\frac{1}{2}}-\lambda_{\alpha})({\frac{1}{2}}-\lambda_{\beta})+W^{\Sigma}_{\alpha \beta},
	\end{aligned}
	\end{equation}
	where $W^{\Sigma}_{\alpha \beta}=W^{\Sigma}_{\alpha \beta\alpha \beta} $ denotes the components of the Weyl curvature of ${\Sigma}$.	\\

	In fact, it follows from the Gauss equation that
	\ban
	R^{\Sigma}_{\alpha  \beta }=R_{\alpha  \beta }-R_{1\alpha 1 \beta }+Hh_{\alpha  \beta }-h_{\alpha\gamma }h_{\gamma\beta}
	\ean
	and the scalar curvature $R^{\Sigma}$ of ${\Sigma}$ satisfies
	\ban
	R^{\Sigma}=R-2R_{11}+H^2-|A|^2.
	\ean
	Since $R=\frac{n-2}{2}$, $Ric(\nabla f, \cdot)=0$, $R_{1i}=0$, $i=1,\cdots ,n$, then
	\ban
	R^{\Sigma}=R+H^2-|A|^2.
	\ean
	Noting
	\ban
	h_{\alpha \beta }=\frac{f_{\alpha \beta }}{|\nabla f|}=\frac{\frac{1}{2}-\lambda_\alpha}{\sqrt{f}}\delta_{\alpha \beta},
	\ean
	then the mean curvature satisfies
	\[
	H=\frac{\frac{n-1}{2}-\sum\lambda_\alpha}{\sqrt{f}}=\frac{1}{2\sqrt{f}}
	\]
	and
	\begin{align*}
	|A|^2=&\frac{1}{f}\sum(\frac{1}{2}-\lambda_\alpha)^2=\frac{1}{f}(1-\sum\lambda_\alpha+\sum \lambda_\alpha^2)\\
	=&\frac{1}{f}(\frac{n-1}{4}-\frac{n-2}{2}+\frac{n-2}{4})=\frac{1}{4f}.
	\end{align*}
	Hence, $R^{\Sigma}=R=\frac{n-2}{2}$. Together with \eqref{R1a1b}, we see
	\begin{equation*}
	\begin{aligned}
	R^{\Sigma}_{\alpha  \beta }=&R_{\alpha  \beta }-\frac{\nabla_{\nabla f} R_{\alpha \beta}+\left( \frac{1}{2}R_{\alpha \beta}-R_{\alpha k}R_{k\beta}\right) }{f}
	+\frac{\frac{1}{2}-\lambda_\alpha}{2f}\delta_{\alpha \beta}\\
	&-\frac{(\frac{1}{2}-\lambda_\alpha)(\frac{1}{2}-\lambda_\beta)}{f}\delta_{\alpha \gamma}\delta_{\gamma\beta}\\
	=&R_{\alpha  \beta }-\frac{ \nabla_{\nabla f} R_{\alpha \beta}}{f}+\frac{1}{f}[-\lambda_\alpha(\frac{1}{2}-\lambda_\alpha)+\frac{1}{2}(\frac{1}{2}-\lambda_\alpha)-(\frac{1}{2}-\lambda_\alpha)^2]\delta_{\alpha \beta}\\
	=&R_{\alpha  \beta }-\frac{\nabla _{\nabla f}  R_{\alpha \beta}}{f},
	\end{aligned}
	\end{equation*}
	which implies \eqref{ricsigma} holds.
	
	\smallskip
	By $|Ric|^2=\frac{n-2}{4}$ agian, we have $Ric\cdot\nabla_{\nabla f}Ric=0$.
	\ban
	|Ric^{\Sigma}|^2&&=|Ric|^2+\frac{|\nabla_{\nabla f} Ric|^2}{f^2}-2\frac{Ric\cdot\nabla_{\nabla f}Ric}{f}\\
	&&=|Ric|^2+\frac{|\nabla_{\nabla f} Ric|^2}{f^2}.
	\ean
	
	Finally, recall that the relationship between curvature and the Weyl curvature
	\ban
	R_{ijkl}=&&W_{ijkl}+{\frac{1}{n-3}}(g_{ik}R_{jl}-g_{il}R_{jk}-g_{jk}R_{il}+g_{jl}R_{ik})\\
	&&-{\frac{1}{(n-2)(n-3)}}R(g_{ik}g_{jl}-g_{il}g_{jk}),
	\ean
	and we get
	\begin{equation*}
	\begin{aligned}
	K^{\Sigma}_{\alpha \beta }=&{\frac{1}{n-3}}(R^{\Sigma}_{\alpha \alpha }+R^{\Sigma}_{\beta \beta})-{\frac{1}{(n-2)(n-3)}}R^{\Sigma}\\
	=&{\frac{1}{n-3}}(R_{\alpha \alpha}-K_{1\alpha }+Hh_{\alpha \alpha}-h_{\alpha \alpha}^2+R_{\beta \beta}-K_{1\beta}+Hh_{\beta \beta }-h_{\beta \beta}^2)-{\frac{1}{2(n-3)}}\\
	=&{\frac{1}{n-3}}\left(\lambda_{\alpha}+\lambda_{\beta}-K_{1\alpha }-K_{1\beta  }+H(h_{\alpha \alpha}+h_{\beta \beta })-h_{\alpha \alpha}^2-h_{\beta \beta}^2\right)-{\frac{1}{2(n-3)}}+W^{\Sigma}_{\alpha \beta }.
	\end{aligned}
	\end{equation*}
	on the  hypersurface $\Sigma$. It follows from the Gauss equation that
	\begin{equation*}
	\begin{aligned}
	K_{\alpha \beta }=&K^{\Sigma}_{\alpha \beta }-h_{\alpha \alpha}h_{\beta \beta}+h_{\alpha \beta }^2\\
	=&{\frac{1}{n-3}}\left(\lambda_{\alpha}+\lambda_{\beta}-K_{1\alpha }-K_{1\beta }+H(h_{\alpha \alpha}+h_{\beta \beta })-h_{\alpha \alpha}^2-h_{\beta \beta}^2\right)\\
	&-{\frac{1}{2(n-3)}}-h_{\alpha \alpha}h_{\beta \beta}+W^{\Sigma}_{\alpha \beta }\\
	=&{\frac{1}{n-3}}(\lambda_{\alpha}+\lambda_{\beta})-{\frac{1}{2(n-3)}}-{\frac{1}{n-3}}(K_{1\alpha }+K_{1\beta  })+{\frac{1}{2(n-3)f}}[({\frac{1}{2}}-\lambda_{\alpha })+({\frac{1}{2}}-\lambda_{\beta})]\\
	&- {\frac{1}{(n-3)f}}\left[ ({\frac{1}{2}}-\lambda_{\alpha})^2+({\frac{1}{2}}-\lambda_{\beta})^2\right]  -{\frac{1}{f}}({\frac{1}{2}}-\lambda_{\alpha})({\frac{1}{2}}-\lambda_{\beta})+W^{\Sigma}_{\alpha \beta }.
	\end{aligned}
	\end{equation*}
	We have completed the proof of equations \eqref{rsigma}-\eqref{kab} in \textbf{Claim}.\\

	Using equations \eqref{rsigma}-\eqref{kab}, we have the following result.
	
	\begin{lem}\label{lek2a}
		Let $(M^n, g, f)$ be an $n$-dimensional shrinking gradient Ricci soliton with constant scalar curvature $\frac{n-2}{2}$. Suppose each level set of $f$ has vanishing Weyl curvature, then
 \begin{equation*}
	\begin{aligned}
		&-2\sum_{\alpha=3}^n K_{2\alpha}\lambda_\alpha\\
		=&-\frac{1}{n-3}(\lambda_1+\lambda_2)[(n-1)-4(\lambda_1+\lambda_2)] +\frac{n-2}{(n-3)f}\nabla f\cdot \nabla (\lambda_1+\lambda_2)\\
		&-\frac{2}{(n-3)f}\nabla f\cdot \nabla(\lambda_1+\lambda_2)^2
		-\frac{2}{f}(\lambda_1+\lambda_2)({\lambda_1+\lambda_2-\frac{1}{2}})^2  _.
	\end{aligned}
\end{equation*}
\end{lem}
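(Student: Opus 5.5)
The plan is to set $\alpha=2$ in \eqref{kab} with $W^{\Sigma}_{2\beta}=0$, multiply by $\lambda_\beta$, and sum over $\beta=3,\dots,n$. Every sum that appears is then reduced to an expression in $\lambda_2$ alone. Throughout I work at a point in the frame $\{e_i\}$ of Ricci eigenvectors with $e_1=\nabla f/|\nabla f|$, exactly as in the proof of \eqref{l12}. In this frame $\lambda_1=0$, and the $\nabla f\cdot\nabla\lambda_\beta$ terms are read as $(\nabla_{\nabla f}Ric)(e_\beta,e_\beta)$, which is how they enter \eqref{k1a}.

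The needed identities come from Lemma \ref{lek1a} and its proof:
\[
\sum_{\beta\ge 3}\lambda_\beta=\tfrac{n-2}{2}-\lambda_2,\qquad \sum_{\beta\ge 3}\lambda_\beta^2=\tfrac{n-2}{4}-\lambda_2^2 .
\]
I also use the consequence of $|Ric|^2=\frac{n-2}{4}$ being constant, namely $Ric\cdot\nabla_{\nabla f}Ric=0$. In the diagonal frame this reads $\sum_{\beta\ge3}\lambda_\beta\,\nabla_{\nabla f}R_{\beta\beta}=-\lambda_2\nabla_{\nabla f}R_{22}$.

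I then expand the summed right-hand side of \eqref{kab} term by term.

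\emph{(a) The constant and $\lambda_2$ terms.} The term $\frac{1}{n-3}(\lambda_2+\lambda_\beta)-\frac{1}{2(n-3)}$ gives, after multiplying by $\lambda_\beta$ and summing,
\[
\frac{1}{n-3}\Big[\big(\lambda_2-\tfrac12\big)\big(\tfrac{n-2}{2}-\lambda_2\big)+\tfrac{n-2}{4}-\lambda_2^2\Big].
\]
This is a quadratic in $\lambda_2$. After multiplying by $-2$ it should produce the $-\frac{1}{n-3}\lambda_2[(n-1)-4\lambda_2]$ term.

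\emph{(b) The $K_{1\cdot}$ terms.} For $-\frac{1}{n-3}(K_{12}+K_{1\beta})$ I substitute \eqref{k1a}. The $K_{12}$ part carries the factor $\sum\lambda_\beta=\frac{n-2}{2}-\lambda_2$. Its derivative piece is $\big(\frac{n-2}{2}-\lambda_2\big)\nabla f\cdot\nabla\lambda_2/f$. The $K_{1\beta}$ part contributes the derivative piece $-\lambda_2\nabla f\cdot\nabla\lambda_2/f$ by the identity above. Together these give $\frac{1}{f}\big(\frac{n-2}{2}-2\lambda_2\big)\nabla f\cdot\nabla\lambda_2$. After multiplying by $-\frac{2}{n-3}$ and rewriting $2\lambda_2\nabla\lambda_2=\nabla\lambda_2^2$, this should match the two gradient terms in the statement, up to the sign conventions I will track.

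\emph{(c) The $1/f$ terms.} The remaining pieces are the $\frac{1}{2(n-3)f}$ term, the $\frac{1}{(n-3)f}$ squared term, the $-\frac1f(\frac12-\lambda_2)(\frac12-\lambda_\beta)$ cross term, and the zeroth-order part of $K_{1\beta}$ from \eqref{k1a}. Each of these yields sums of $\lambda_\beta$, $\lambda_\beta^2$ and possibly $\lambda_\beta^3$. The cubic sums cancel: the contribution $+\frac{1}{(n-3)f}\sum\lambda_\beta^3$ from $-\frac{1}{n-3}K_{1\beta}\lambda_\beta$ is offset by $-\frac{1}{(n-3)f}\sum\lambda_\beta^3$ from $-\frac{1}{(n-3)f}(\frac12-\lambda_\beta)^2\lambda_\beta$. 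Everything left reduces through the two power-sum identities to a polynomial in $\lambda_2$. For the cross term I use
\[
\sum_{\beta\ge3}\big(\tfrac12-\lambda_\beta\big)\lambda_\beta=-\big(\tfrac12-\lambda_2\big)\lambda_2 ,
\]
which follows from $\sum_{\beta\ge2}\lambda_\beta(\frac12-\lambda_\beta)=0$. This term therefore gives $-\frac{2}{f}\lambda_2(\lambda_2-\frac12)^2$ after multiplying by $-2$.

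The only real obstacle is the bookkeeping in (c). All $\frac{1}{(n-3)f}$ polynomial contributions, including the one from $K_{12}$'s zeroth-order part $\lambda_2(\frac12-\lambda_2)/f$ times $\frac{n-2}{2}-\lambda_2$, must cancel exactly, so that only the clean $-\frac{2}{f}\lambda_2(\lambda_2-\frac12)^2$ survives. I will verify this cancellation by collecting coefficients of $1,\lambda_2,\lambda_2^2,\lambda_2^3$ separately. Finally I replace $\lambda_2$ by $\lambda_1+\lambda_2$, which is legitimate since $\lambda_1=0$.
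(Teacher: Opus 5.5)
Your plan is correct and is essentially the paper's proof: substitute \eqref{kab} with $W^{\Sigma}=0$, sum against $\lambda_\beta$ over $\beta\ge 3$, and reduce everything with the two power-sum identities. The paper treats the $K_{1\beta}$ terms a bit differently, writing $\sum_{\beta\ge3}K_{1\beta}\lambda_\beta=\sum_{\beta\ge2}K_{1\beta}\lambda_\beta-K_{12}\lambda_2$ and invoking Lemma \ref{lek1a}. That is equivalent to your $Ric\cdot\nabla_{\nabla f}Ric=0$ identity combined with the cubic cancellation, and the $\frac{1}{(n-3)f}$ terms do cancel exactly as you predict.

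One sign needs fixing in (b). The derivative piece $\frac{1}{f}\big(\frac{n-2}{2}-2\lambda_2\big)\nabla f\cdot\nabla\lambda_2$ is multiplied by $(-2)\cdot\big(-\frac{1}{n-3}\big)=+\frac{2}{n-3}$, not $-\frac{2}{n-3}$. That positive factor is what produces the two gradient terms with the signs in the statement.
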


\begin{proof}
It follows from \eqref{kab} that
\begin{equation*}
	\begin{aligned}
		&-2\sum_{\alpha=3}^nK_{2\alpha}\lambda_\alpha\\
		 =&\frac{-2}{n-3}\sum_{\alpha=3}^n(\lambda_{2}+\lambda_{\alpha})\lambda_\alpha+\frac{1}{n-3}\sum_{\alpha=3}^n\lambda_\alpha+\frac{2}{n-3}\sum_{\alpha=3}^n(K_{12}+K_{1\alpha })\lambda_\alpha\\
		&-{\frac{1}{(n-3)f}}\sum_{\alpha=3}^n\left[ ({\frac{1}{2}}-\lambda_{2})+({\frac{1}{2}}-\lambda_{\alpha})\right] \lambda_\alpha\\
		&+\frac{2}{(n-3)f}\left[({\frac{1}{2}}-\lambda_{2})^2\sum_{\alpha=3}^n\lambda_{\alpha}+ \sum_{\alpha=3}^n({\frac{1}{2}}-\lambda_{\alpha})^2 \lambda_{\alpha}\right]\\
		&+\frac{2}{f}({\frac{1}{2}}-\lambda_{2})\sum_{\alpha=3}^n({\frac{1}{2}}-\lambda_{\alpha})\lambda_{\alpha}
		\\
	\end{aligned}
\end{equation*}

Next, we handle these items one by one. First, notice that $\sum_{\alpha=3}^n\lambda_\alpha=\frac{n-2}{2}-\lambda_2$ and $\sum_{\alpha=3}^n\lambda_\alpha^2=\frac{n-2}{4}-\lambda_2^2$, it is easy to check
\begin{equation*}
	\begin{aligned}
		&-\frac{2}{n-3}\sum_{\alpha=3}^n(\lambda_{2}+\lambda_{\alpha})\lambda_\alpha+\frac{1}{n-3}\sum_{\alpha=3}^n\lambda_\alpha\\		 =&-\frac{2}{n-3}(\lambda_{2}\sum_{\alpha=3}^n\lambda_\alpha+\sum_{\alpha=3}^n\lambda_\alpha^2)+\frac{1}{n-3}(\frac{n-2}{2}-\lambda_{2})\\
		 =&-\frac{2}{n-3}\left[\lambda_2\left(\frac{n-2}{2}-\lambda_2\right)+\left(\frac{n-2}{4}-\lambda_2^2\right)\right]+\frac{1}{n-3}\left(\frac{n-2}{2}-\lambda_2\right)\\
		=&\frac{4}{n-3}\lambda_2^2+\frac{1-n}{n-3}\lambda_2.\\
	\end{aligned}
\end{equation*}

It follows from \eqref{k1a} that
\begin{equation*}
	\begin{aligned}
		&\frac{2}{n-3}\sum_{\alpha=3}^n(K_{12}+K_{1\alpha })\lambda_\alpha\\
		=& \frac{2}{n-3}K_{12}({\frac{n-2}{2}}-2\lambda_{2})+\frac{2}{n-3}\sum_{\alpha=2}^nK_{1\alpha}\lambda_\alpha\\
		=&\frac{2}{n-3}({\frac{n-2}{2}}-2\lambda_{2})\frac{1}{f}\left[\nabla f\cdot \nabla \lambda_2 +\lambda_2(\frac{1}{2}-\lambda_2) \right]+\frac{2}{n-3}\sum_{\alpha=2}^nK_{1\alpha}\lambda_\alpha\\
		=&\frac{n-2}{(n-3)f}\nabla f\cdot \nabla \lambda_2-\frac{2}{(n-3)f}\nabla f\cdot \nabla \lambda_2^2
		+\frac{2}{(n-3)f}\lambda_2(\frac{1}{2}-\lambda_2)({\frac{n-2}{2}}-2\lambda_{2})\\
		&+\frac{2}{(n-3)}\sum_{\alpha=2}^nK_{1\alpha}\lambda_\alpha.
	\end{aligned}
\end{equation*}

Using the fact that $\sum_{\alpha=3}^n\lambda_\alpha=\frac{n-2}{2}-\lambda_2$ and $\sum_{\alpha=3}^n\lambda_\alpha^2=\frac{n-2}{4}-\lambda_2^2$,
we have
\begin{align*}
\sum_{\alpha=3}^n \lambda_\alpha(\frac{1}{2}-\lambda_\alpha)=\frac{1}{2}(\frac{n-2}{2}-\lambda_2)-(\frac{n-2}{4}-\lambda_2^2)=\lambda_2(\lambda_2-\frac{1}{2}).
\end{align*}

Hence
\begin{equation*}
	\begin{aligned}
		&-{\frac{1}{(n-3)f}}\sum_{\alpha=3}^n\left[ ({\frac{1}{2}}-\lambda_{2})+({\frac{1}{2}}-\lambda_{\alpha})\right] \lambda_\alpha\\	 &+\frac{2}{(n-3)f}(\frac{1}{2}-\lambda_{2})^2\sum_{\alpha=3}^n\lambda_{\alpha}+\frac{2}{f}({\frac{1}{2}}-\lambda_{2})\sum_{\alpha=3}^n({\frac{1}{2}}-\lambda_{\alpha})\lambda_{\alpha}\\
		 =&-\frac{1}{(n-3)f}\left[(\frac{1}{2}-\lambda_2)(\frac{n-2}{2}-\lambda_2)+\lambda_2(\lambda_2-\frac{1}{2})\right] \\
		  &+\frac{2}{(n-3)f}\left(\frac{1}{2}-\lambda_2\right)^2(\frac{n-2}{2}-\lambda_2)+\frac{2}{f}(\frac{1}{2}-\lambda_2)\lambda_2(\lambda_2-\frac{1}{2})\\
		 =&\frac{1}{(n-3)f}(\frac{1}{2}-\lambda_2)(\frac{n-2}{2}-\lambda_2)(-1+1-2\lambda_2)+\frac{1}{f}(\frac{-1}{n-3}+1-2\lambda_2)\lambda_2(\lambda_2-\frac{1}{2})\\
		 =&\frac{-2\lambda_2}{(n-3)f}(\frac{1}{2}-\lambda_2)(\frac{n-2}{2}-\lambda_2)+\frac{1}{f}(\frac{n-4}{n-3}-2\lambda_2)\lambda_2(\lambda_2-\frac{1}{2})\\
=&\frac{1}{f}\lambda_2(\lambda_2-\frac{1}{2})(2-\frac{2n-4}{n-3}\lambda_2).
	\end{aligned}
\end{equation*}

Therefore, we have
\begin{equation*}
	\begin{aligned}
		&-2\sum_{\alpha=3}^nK_{2\alpha}\lambda_\alpha\\
	=&\frac{4}{n-3}\lambda_2^2+\frac{1-n}{n-3}\lambda_2+\frac{n-2}{(n-3)f}\nabla f\cdot \nabla \lambda_2-\frac{2}{(n-3)f}\nabla f\cdot \nabla \lambda_2^2+\frac{2}{n-3}\sum_{\alpha=2}^nK_{1\alpha}\lambda_\alpha\\
		&+\frac{2}{(n-3)f}\sum_{\alpha=3}^n({\frac{1}{2}}-\lambda_{\alpha})^2\lambda_{\alpha}	 +\frac{2}{(n-3)f}\lambda_2(\frac{1}{2}-\lambda_2)({\frac{n-2}{2}}-2\lambda_{2})\\
		&+\frac{1}{f}\lambda_2(\lambda_2-\frac{1}{2})(2-\frac{2n-4}{n-3}\lambda_2)\\
=&\frac{4}{n-3}\lambda_2^2+\frac{1-n}{n-3}\lambda_2+\frac{n-2}{(n-3)f}\nabla f\cdot \nabla \lambda_2-\frac{2}{(n-3)f}\nabla f\cdot \nabla \lambda_2^2-\frac{2}{(n-3)f}({\frac{1}{2}}-\lambda_2)^2\lambda_2\\
 &+	\frac{2}{(n-3)f}\lambda_2(\frac{1}{2}-\lambda_2)({\frac{n-2}{2}}-2\lambda_{2})
		+\frac{1}{f}\lambda_2(\lambda_2-\frac{1}{2})(2-\frac{2n-4}{n-3}\lambda_2)\\
=&\frac{4}{n-3}\lambda_2^2+\frac{1-n}{n-3}\lambda_2+\frac{n-2}{(n-3)f}\nabla f\cdot \nabla \lambda_2-\frac{2}{(n-3)f}\nabla f\cdot \nabla \lambda_2^2\\
&+\frac{1}{f}\lambda_2(\lambda_2-\frac{1}{2})\left( \frac{-2}{n-3}(\lambda_2-\frac{1}{2})-\frac{2}{n-3}(\frac{n-2}{2}-2\lambda_2)+2-\frac{2n-4}{n-3}\lambda_2\right)\\
=&\frac{4}{n-3}\lambda_2^2+\frac{1-n}{n-3}\lambda_2+\frac{n-2}{(n-3)f}\nabla f\cdot \nabla \lambda_2-\frac{2}{(n-3)f}\nabla f\cdot \nabla \lambda_2^2-\frac{2}{f}\lambda_2(\lambda_2-\frac{1}{2})^2.
	\end{aligned}
\end{equation*}
The proof of Lemma \ref{lek2a} was thus completed.
\end{proof}
	
	We have the following two basic inequalities to obtain our desired estimate.
	\begin{lem}
		Let $(M^n, g, f)$ be an $n$-dimensional shrinking gradient Ricci soliton with constant scalar curvature $\frac{n-2}{2}$. Then
		\begin{equation} \label{la}
		(\frac{1}{2}-\lambda_\alpha)^2\leq\sum_{\alpha=3}^n(\frac{1}{2}-\lambda_\alpha)^2=\lambda_2(1-\lambda_2)
		\end{equation}
		for $\alpha=3,4,\cdots, n$;
		\begin{equation}\label{l345}
		\sum_{\alpha=3}^n(\frac{1}{2}-\lambda_\alpha)^2\lambda_\alpha\leq\lambda_2(1-\lambda_2)(\frac{n-2}{2}-\lambda_2);
		\end{equation}
		\begin{equation}\label{l345'}
		\sum_{\alpha=3}^n(\frac{1}{2}-\lambda_\alpha)^2\lambda_\alpha^2\leq\lambda_2(1-\lambda_2)(\frac{n-2}{4}-\lambda_2^2).
		\end{equation}
	\end{lem}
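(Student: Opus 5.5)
The plan is to use only the two algebraic identities established in the proof of Lemma \ref{lek1a}: $\sum_{\alpha=2}^n\lambda_\alpha=\frac{n-2}{2}$ and $\sum_{\alpha=2}^n\lambda_\alpha^2=\frac{n-2}{4}$. Removing the $\alpha=2$ term gives
\[
\sum_{\alpha=3}^n\lambda_\alpha=\frac{n-2}{2}-\lambda_2,\qquad \sum_{\alpha=3}^n\lambda_\alpha^2=\frac{n-2}{4}-\lambda_2^2,
\]
and together with the nonnegativity $0\le\lambda_\alpha$ from Proposition \ref{positive}, these are all we need.

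First we verify the identity in \eqref{la}. Expanding the square gives
\[
\sum_{\alpha=3}^n\Bigl(\frac12-\lambda_\alpha\Bigr)^2=\frac{n-2}{4}-\sum_{\alpha=3}^n\lambda_\alpha+\sum_{\alpha=3}^n\lambda_\alpha^2 .
\]
Substituting the two sums above, the right side becomes $\frac{n-2}{4}-\frac{n-2}{2}+\lambda_2+\frac{n-2}{4}-\lambda_2^2=\lambda_2(1-\lambda_2)$. The inequality $(\frac12-\lambda_\alpha)^2\le\sum_{\beta=3}^n(\frac12-\lambda_\beta)^2$ then holds trivially, since every summand is nonnegative.

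For \eqref{l345} and \eqref{l345'}, the natural step is to bound each weight $(\frac12-\lambda_\alpha)^2$ by the full sum $\lambda_2(1-\lambda_2)$ from \eqref{la}. Because $\lambda_\alpha\ge 0$ and $\lambda_\alpha^2\ge0$, this gives
\[
\sum_{\alpha=3}^n\Bigl(\frac12-\lambda_\alpha\Bigr)^2\lambda_\alpha\le \lambda_2(1-\lambda_2)\sum_{\alpha=3}^n\lambda_\alpha=\lambda_2(1-\lambda_2)\Bigl(\frac{n-2}{2}-\lambda_2\Bigr).
\]
The analogous estimate with $\lambda_\alpha^2$ in place of $\lambda_\alpha$ gives $\lambda_2(1-\lambda_2)(\frac{n-2}{4}-\lambda_2^2)$. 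This crude term-by-term bound is exactly of the form stated, so no sharper (for example Hölder-type) argument is needed.

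The only point needing care is that the weights $\lambda_\alpha$ and $\lambda_\alpha^2$ are nonnegative, which is what allows a pointwise upper bound on $(\frac12-\lambda_\alpha)^2$ to pass through the weighted sum. This nonnegativity is supplied by the Ricci nonnegativity in Proposition \ref{positive}. As a byproduct, \eqref{la} also shows that $\lambda_2(1-\lambda_2)\ge0$, so $\lambda_2\le1$.
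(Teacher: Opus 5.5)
Your proof is correct and follows essentially the paper's route. The paper first shows $\sum_{\alpha=2}^n(\frac{1}{2}-\lambda_\alpha)^2=\frac{1}{4}$ from $\sum_{\alpha=2}^n\lambda_\alpha(\frac{1}{2}-\lambda_\alpha)=0$ and then removes the $\alpha=2$ term, whereas you expand directly over $\alpha\ge 3$; this is the same computation. Both then obtain the two weighted bounds by bounding each $(\frac{1}{2}-\lambda_\alpha)^2$ by $\lambda_2(1-\lambda_2)$ and using $\lambda_\alpha\ge 0$.
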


	\begin{proof}
		It follows from \eqref{lr} that
		\begin{equation*}
		\begin{aligned}
		0=&\sum_{\alpha=2}^n\lambda_{\alpha}({\frac{1}{2}}-\lambda_{\alpha})\\
		=&-\sum_{\alpha=2}^n\left[ ({\frac{1}{2}}-\lambda_{\alpha})^2+\frac{1}{2}({\frac{1}{2}}-\lambda_{\alpha})\right] \\
		=&-\sum_{\alpha=2}^n({\frac{1}{2}}-\lambda_\alpha)^2+\frac{1}{4},
		\end{aligned}
		\end{equation*}
		which yeilds that
		\begin{equation*}
		\begin{aligned}
		\sum_{\alpha=3}^n({\frac{1}{2}}-\lambda_{\alpha})^2
		=&\frac{1}{4}-({\frac{1}{2}}-\lambda_2)^2\\
		=&\lambda_2(1-\lambda_2).		
		\end{aligned}
		\end{equation*}
		Therefore, we have
		\begin{equation*}
		\begin{aligned}
		&\,\sum_{\alpha=3}^n(\frac{1}{2}-\lambda_\alpha)^2\lambda_\alpha\\
		\leq&\, \lambda_2(1-\lambda_2) (\lambda_3+\lambda_4+\cdot\cdot\cdot+\lambda_n)\\
		=&\,\lambda_2(1-\lambda_2)(\frac{n-2}{2}-\lambda_2).		
		\end{aligned}
		\end{equation*}
		Similarly, \eqref{l345'} holds.
		
	\end{proof}
	
	Finally, we have the following proposition.
	
	\begin{prop}\label{lap12}
		Let $(M^n, g, f)$ be an $n$-dimensional shrinking gradient Ricci soliton with constant scalar curvature $R=\frac{n-2}{2}$.  Suppose each level set of $f$ has vanishing Weyl curvature, then we have the following inequality holds in the barrier sense,
		\begin{equation*}
		\begin{aligned}
		&\Delta_f(\lambda_1+\lambda_2)\\
		&\leq\frac{-2}{n-3}(\lambda_1+\lambda_2)+\frac{4}{n-3}(\lambda_1+\lambda_2)^2+\frac{n-2}{(n-3)f}\nabla f\cdot \nabla (\lambda_1+\lambda_2)\\
&-\frac{2}{(n-3)f}\nabla f\cdot \nabla (\lambda_1+\lambda_2)^2+\frac{2}{f}(\lambda_1+\lambda_2)\left[1-(\lambda_1+\lambda_2)\right]\left[\frac{n-2}{2}-(\lambda_1+\lambda_2)\right]
		\end{aligned}
		\end{equation*}	
		on $M\setminus D(a)$ for some $a>0$.	
	\end{prop}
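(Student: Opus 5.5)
The plan is to combine the barrier inequality \eqref{l12} with the two evaluations of the curvature sums, Lemma \ref{lek1a} and Lemma \ref{lek2a}, and then use $\lambda_1=0$ together with \eqref{l345} to discard the remaining sign-definite term.

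\textbf{Step 1: choice of region and barrier.} First I will restrict to $M\setminus D(a)$ with $a>0$ large. There $f>0$ and $\nabla f\neq 0$, so $e_1=\nabla f/|\nabla f|$ is well defined and the divisions by $f$ in \eqref{k1a}, \eqref{kab} and Lemma \ref{lek2a} are legitimate. This is presumably the only reason for the restriction to $M\setminus D(a)$. At a point $x$ I take the barrier function $u=Ric(e_1,e_1)+Ric(e_2,e_2)$ from the proof of \eqref{l12}, built from parallel frames along geodesics from $x$.

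The gradient terms $\nabla f\cdot\nabla\lambda_2$ on the right-hand side will be read as $\nabla_{\nabla f}R_{22}(x)=\nabla_{\nabla f}u(x)$, because the frame is parallel along the radial geodesic in direction $e_1$. This is exactly how they entered through \eqref{k1a}, and it makes sense even where $\lambda_2$ fails to be smooth. This bookkeeping is the only delicate point I anticipate; the rest is algebra.

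\textbf{Step 2: substitution.} At $x$ we have $\lambda_1=0$, so $\lambda_1+\lambda_2=\lambda_2$. Substituting Lemma \ref{lek1a} for $-2\sum_{\alpha\ge2}K_{1\alpha}\lambda_\alpha$ and Lemma \ref{lek2a} for $-2\sum_{\alpha\ge3}K_{2\alpha}\lambda_\alpha$ into \eqref{l12} gives
\[
\Delta_f\lambda_2\le \lambda_2-\tfrac{n-1}{n-3}\lambda_2+\tfrac{4}{n-3}\lambda_2^2+\tfrac{n-2}{(n-3)f}\nabla f\cdot\nabla\lambda_2-\tfrac{2}{(n-3)f}\nabla f\cdot\nabla\lambda_2^2
+\tfrac{2}{f}\sum_{\alpha=2}^n\lambda_\alpha(\tfrac12-\lambda_\alpha)^2-\tfrac{2}{f}\lambda_2(\lambda_2-\tfrac12)^2 .
\]

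\textbf{Step 3: simplification.}
\begin{itemize}
\item The linear terms combine as $\lambda_2-\frac{n-1}{n-3}\lambda_2=-\frac{2}{n-3}\lambda_2$.
\item In the sum $\frac{2}{f}\sum_{\alpha=2}^n\lambda_\alpha(\frac12-\lambda_\alpha)^2$, the $\alpha=2$ term cancels exactly with $-\frac{2}{f}\lambda_2(\lambda_2-\frac12)^2$.
\item What remains is $\frac{2}{f}\sum_{\alpha=3}^n\lambda_\alpha(\frac12-\lambda_\alpha)^2$. By \eqref{l345} this is at most $\frac{2}{f}\lambda_2(1-\lambda_2)(\frac{n-2}{2}-\lambda_2)$, using $f>0$.
\end{itemize}
Rewriting $\lambda_2$ as $\lambda_1+\lambda_2$ then yields exactly the asserted inequality at $x$.

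Since the barrier $u$ touches $\lambda_1+\lambda_2$ from above at $x$ and $x$ was arbitrary in $M\setminus D(a)$, the inequality holds there in the barrier sense.
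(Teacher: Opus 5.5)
Your proposal is correct and is essentially the paper's proof. Like the paper, you substitute Lemma \ref{lek1a} and Lemma \ref{lek2a} into \eqref{l12}, combine the linear terms into $-\frac{2}{n-3}\lambda_2$, cancel the $\alpha=2$ term of $\frac{2}{f}\sum_{\alpha\ge 2}\lambda_\alpha(\frac12-\lambda_\alpha)^2$ against $-\frac{2}{f}\lambda_2(\lambda_2-\frac12)^2$, and bound the remaining $\alpha\ge 3$ sum by \eqref{l345}. Your remark that the gradient terms should be read as $\nabla_{\nabla f}u(x)=(\nabla_{\nabla f}Ric)(e_2,e_2)(x)$ for the barrier $u$ is a useful clarification that the paper leaves implicit; it holds because $e_1$ stays equal to $\nabla f/|\nabla f|$ along the integral curve of $\nabla f$, which is a geodesic, so $Ric(e_1,e_1)\equiv 0$ there.
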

	
	\begin{proof} By inserting equations from Lemma \ref{lek2a} and \ref{lek1a} into \eqref{l12}, we obtain:
		\begin{equation*}
		\begin{aligned}
		&\Delta_f(\lambda_1+\lambda_2)\\
		\leq&\lambda_2-2\sum_{\alpha=2}^nK_{1\alpha}\lambda_\alpha-2\sum_{\alpha=3}^nK_{2\alpha}\lambda_\alpha\\
		=&\lambda_2+\frac{4}{n-3}\lambda_2^2+\frac{1-n}{n-3}\lambda_2+\frac{n-2}{(n-3)f}\nabla f\cdot \nabla \lambda_2-\frac{2}{(n-3)f}\nabla f\cdot \nabla \lambda_2^2\\
&-\frac{2}{f}\lambda_2(\lambda_2-\frac{1}{2})^2+\frac{2}{f}\sum_{\alpha=2}^n \lambda_\alpha(\lambda_\alpha-\frac{1}{2})^2\\
=&\frac{-2}{n-3}\lambda_2+\frac{4}{n-3}\lambda_2^2+\frac{n-2}{(n-3)f}\nabla f\cdot \nabla \lambda_2-\frac{2}{(n-3)f}\nabla f\cdot \nabla \lambda_2^2+\frac{2}{f}\sum_{\alpha=3}^n\lambda_\alpha(\lambda_\alpha-\frac{1}{2})^2\\
\leq &\frac{-2}{n-3}\lambda_2+\frac{4}{n-3}\lambda_2^2+\frac{n-2}{(n-3)f}\nabla f\cdot \nabla \lambda_2-\frac{2}{(n-3)f}\nabla f\cdot \nabla \lambda_2^2+\frac{2}{f}\lambda_2(1-\lambda_2)(\frac{n-2}{2}-\lambda_2),
		\end{aligned}
		\end{equation*}	
		where \eqref{l345} was used in the last inequality, and the proof of this proposition was completed.
	\end{proof}

	\section{Boundedness of Riemannian curvature}\label{sec4}
In this section, we prove the curvature is bounded, and the proof is similar to the argument in \cite{Li-Ou-Qu-Wu}. For completeness, we present the detailed proof.

\begin{lem}[\cite{Chen-Zhu}]\label{point picking}
	Given a complete noncompact Riemannian manifold with unbounded curvature, we can find a sequence of points $p_j$ divergent to infinity such that for each positive integer $j$, we have $|Rm(p_j)|\geq j$ and
	\ban
	|Rm(x)|\leq 4 |Rm(p_j)|
	\ean
	for $x\in B(p_j, \frac{j}{\sqrt{|Rm(p_j)|}})$.
\end{lem}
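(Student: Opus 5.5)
The plan is a direct iterative point-picking argument; the only input is that $|Rm|$ is continuous, hence bounded on compact sets, while it is unbounded on $M$. Fix a base point $o\in M$ and a positive integer $j$. The closed ball $\overline{B(o,j+2)}$ is compact, so $|Rm|$ is bounded there. Since $|Rm|$ is unbounded on $M$, we can choose $x_0\notin \overline{B(o,j+2)}$ with
\[
Q_0:=|Rm(x_0)|\geq \max\{j,j^2\}.
\]
This choice makes the initial curvature scale large, so that the radius $j/\sqrt{Q_0}$ is at most $1$.

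Next I run the iteration. Given $x_k$ with $Q_k=|Rm(x_k)|$, stop if $|Rm(x)|\leq 4Q_k$ for all $x\in B(x_k, j/\sqrt{Q_k})$, and set $p_j=x_k$. Otherwise pick $x_{k+1}\in B(x_k, j/\sqrt{Q_k})$ with $Q_{k+1}>4Q_k$. By induction $Q_k> 4^kQ_0$, so $j/\sqrt{Q_k}\leq 2^{-k}j/\sqrt{Q_0}$, and the triangle inequality gives
\[
d(x_k,x_0)\leq \sum_{i\ge 0}\frac{j}{2^{i}\sqrt{Q_0}}=\frac{2j}{\sqrt{Q_0}}\leq 2 .
\]
Hence every $x_k$ lies in the compact ball $\overline{B(x_0,2)}$, on which $|Rm|$ is bounded. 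Since $Q_k\to\infty$ along any infinite chain, the iteration must stop after finitely many steps. The point $p_j$ where it stops satisfies $|Rm(x)|\le 4|Rm(p_j)|$ on $B(p_j, j/\sqrt{|Rm(p_j)|})$ by the stopping rule, and $|Rm(p_j)|\geq Q_0\geq j$.

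Finally, divergence: $d(o,p_j)\geq d(o,x_0)-d(x_0,p_j)\geq (j+2)-2=j$, so $p_j\to\infty$. The only delicate point, and the one I would check most carefully, is the choice of $Q_0\geq j^2$. This choice is what makes the total displacement of the chain at most $2$, independent of $j$. Placing $x_0$ outside $\overline{B(o,j+2)}$ then forces both divergence of $p_j$ and termination, since the chain stays in a fixed compact set. Everything else is the geometric series estimate above.
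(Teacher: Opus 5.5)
Your proof is correct and is the standard iterative point-picking argument of the cited Chen--Zhu lemma; the paper itself quotes the lemma without reproducing a proof. Requiring $Q_0\ge j^2$ bounds the total drift of the chain by $2$. That keeps the chain inside the compact set $\overline{B(x_0,2)}$, which forces termination, and together with $x_0\notin\overline{B(o,j+2)}$ it gives $d(o,p_j)>j$.
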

The following backward pseudolocality Theorem will also be useful.
\begin{theo}[\cite{Li-Wang2}]\label{backward pseudolocality} For any $\alpha>0$, there is $\epsilon(n, \alpha)$ such that the following holds.

 Let $(M^n, g(t))_{t\in I}$ be a Ricci flow induced by a shrinking gradient Ricci soliton. Given $(x_0, t_0)\in M\times I$ and $r>0$, if
 \ban
 |B_{t_0}(x_0, r)|\geq \alpha r^n, \quad |Rm|\leq (\alpha r)^{-2} \quad on \quad B_{t_0}(x_0, r),
 \ean
 then
 \ban
 |Rm|\leq (\epsilon r)^{-2}  \quad on \quad P(x_0, t_0; (1-\alpha)r, -(\epsilon r)^2, 0).
 \ean
\end{theo}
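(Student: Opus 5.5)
Since this is the backward pseudolocality theorem quoted from \cite{Li-Wang2}, I would reproduce the standard contradiction–blow-up argument rather than derive it from the earlier results of this paper. First normalize by parabolic scaling so that $r=1$ and $t_0=0$. The flow induced by the shrinker is $g(t)=(-t)\varphi_t^{*}g$. Two structural facts, both coming from self-similarity, will be used throughout. First, Perelman's entropy $\mu(g,\tau)$ is uniformly bounded below on the whole flow; this follows from the fact that a shrinker has finite entropy $\mu(g,1)=\log\int (4\pi)^{-n/2}e^{-f}$. Second, by the standard local argument, the lower entropy bound combined with $|B_{0}(x_0,1)|\ge\alpha$ gives uniform $\kappa$-noncollapsing at all scales $\le 1$ and all times in $I$.

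Next I would argue by contradiction. Suppose that for some fixed $\alpha$ there is no admissible $\epsilon$. Then there are shrinker flows $(M_i,g_i(t))$ and points $x_i$ satisfying the hypotheses at $(x_i,0)$, with $\epsilon_i\to0$, together with points $(y_i,t_i)\in P(x_i,0;(1-\alpha),-\epsilon_i^2,0)$ such that $|Rm|(y_i,t_i)>\epsilon_i^{-2}$. I would apply a spacetime point-picking argument, in the spirit of Lemma \ref{point picking}, inside the parabolic region $P(x_i,0;1-\alpha/2,-\epsilon_i^{2},0)$. This yields new points $(z_i,s_i)$ with $Q_i=|Rm|(z_i,s_i)\to\infty$ and $|Rm|\le 4Q_i$ on a backward parabolic neighbourhood of size $j_iQ_i^{-1/2}$, with $j_i\to\infty$. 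Moreover, since $|Rm|\le\alpha^{-2}$ on $B_0(x_i,1)$, the points must satisfy $Q_i(0-s_i)\to$ a positive (possibly infinite) value. Rescaling by $Q_i$, the noncollapsing and Hamilton's compactness theorem then give a limit flow $(M_\infty,g_\infty(t),z_\infty)$ with $|Rm|(z_\infty,0)=1$ on its initial slice. This limit exists forward up to a time $T>0$ corresponding to the rescaled final time, and at that final time it is flat on larger and larger balls, because $\alpha^{-2}Q_i^{-1}\to0$.

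The key step, and the main obstacle, is to rule out such a limit: a noncollapsed flow that has curvature $1$ at time $0$ but is flat (Euclidean, by volume noncollapsing) at a later time $T$. Forward pseudolocality does not apply directly, because the flatness sits at the \emph{later} time. What I would try first is to use reduced distance and reduced volume based at the final slice. Perelman's reduced volume based at a point of the flat final slice, evaluated on earlier slices, is at least the Euclidean-ball contribution. This holds because the final slice is almost Euclidean on huge balls, and the time derivative of the metric is controlled by $4Q_i$, so distances distort only boundedly. Monotonicity then pushes the reduced volume to its maximal value $1$, and the rigidity case of monotonicity forces the limit to be a Gaussian soliton, hence flat, contradicting $|Rm|(z_\infty,0)=1$.

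An alternative I would keep in reserve uses the Li–Wang heat kernel approach. Here self-similarity of the shrinker gives two-sided Gaussian bounds for the conjugate heat kernel, so the Nash entropy based at $(x_i,0)$ is close to $0$. Their $\epsilon$-regularity theorem then converts this almost-zero Nash entropy into the curvature bound on the backward parabolic region. In either approach, the delicate point is justifying that the limit's reduced volume, or Nash entropy, is almost Euclidean. This requires controlling the geometry outside the region of bounded curvature, and that control is exactly where the global shrinker structure, including the entropy bound and the potential estimate \eqref{cao-zhou}, must be used.
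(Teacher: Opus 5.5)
The paper gives no proof of this statement. It imports it from Li--Wang, where it comes out of their heat-kernel and Nash-entropy estimates for shrinker-induced flows (this mirrors Bamler's backward pseudolocality for compact flows). Judged on its own terms, your main blow-up route has a genuine gap, located exactly at the step you call the main obstacle.

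\textbf{The main blow-up route.} There are several separate problems.

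\emph{The reduced-volume step does not work.} Bounded distortion of $g_\infty(t)$ on $[0,T]$ gives only $\tilde V(\bar\tau)\ge c(T)>0$ for the reduced volume based on the flat final slice. It does not give $\tilde V(\bar\tau)=1$ for some $\bar\tau>0$, so the rigidity case is never reached. If you really had a complete limit with bounded curvature on $[0,T]$ and $g_\infty(T)$ flat, Kotschwar's backward uniqueness would finish at once. The difficulty is producing that limit.

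\emph{The case $Q_i|s_i|\to\infty$ is unhandled.} You explicitly allow it, but then the limit contains no flat slice at all. Nothing you say excludes an eternal $\kappa$-noncollapsed limit with $|Rm|\le 4$ and $|Rm|(z_\infty,0)=1$, such as a Bryant soliton.

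\emph{The point-picking is unjustified.} A selection inside $P(x_i,0;1-\alpha/2,-\epsilon_i^2,0)$ cannot yield backward neighbourhoods of size $j_iQ_i^{-1/2}$ when the large curvature sits at the bottom time $-\epsilon_i^2$. The usual fix, a weight vanishing on the parabolic boundary of a longer interval, tends to push $Q_i|s_i|\to\infty$, which is exactly the case with no contradiction.

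\emph{Distance control is missing.} Placing the time-$0$ image of the neighbourhood of $z_i$ inside $B_0(x_i,1)$ requires scale-one distance control between times $s_i$ and $0$. Curvature bounds near $z_i$ do not provide this.

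\textbf{Why these are not technicalities.} Take the flow of a positively curved expanding soliton asymptotic to a cone over $\mathbb{S}^{n-1}(a)$, started at time $-\epsilon^2-\eta$, with $a$ close to $1$ depending on $\epsilon$ and $\eta$ small. It is noncollapsed at all scales. It satisfies your hypotheses at $t=0$ for a fixed small $\alpha$. Yet its tip curvature at $t=-\epsilon^2$ is $c(a)/\eta\to\infty$ as $\eta\to 0$. Your argument uses nothing this example lacks. So the ingredient that makes the theorem true is missing: the flow must exist on $[t_0-r^2,t_0]$, and this enters through a pointed Nash entropy at scale $r^2$ combined with an $\epsilon$-regularity theorem.

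\textbf{Uniformity of the constant.} The statement asserts $\epsilon=\epsilon(n,\alpha)$. Your contradiction sequence runs over different shrinkers, and the noncollapsing constant from each shrinker's entropy $\log\int(4\pi)^{-n/2}e^{-f}$ is not bounded below uniformly in $i$. The hypothesis $|B_0(x_0,1)|\ge\alpha$ controls only time $0$. A uniform result needs a lower bound on the pointed Nash entropy at $(x_0,t_0)$ and scale $r^2$ in terms of $\alpha$, transferred to nearby base points (using $R\ge 0$). This is the kind of input the heat-kernel theory of the cited work supplies.

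\textbf{The fallback route.} It points in the right direction, but its key claim is misjustified. Two-sided Gaussian bounds on the conjugate heat kernel only give $\mathcal N_{x,t_0}(\tau)\ge -C$, never $\mathcal N_{x,t_0}(\tau)\ge-\epsilon$. What must be shown is $\mathcal N_{x,t_0}((\delta r)^2)\ge-\epsilon$ for $x\in B_{t_0}(x_0,(1-\alpha)r)$, with $\delta=\delta(n,\alpha)$, using only the curvature bound at time $t_0$. Via $\epsilon$-regularity this is exactly what has to be proved, and you leave it open.
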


\begin{theo}\label{bounded curvature}Let $(M^n, g, f)$ be an $n$-dimensional shrinking gradient Ricci soliton with constant scalar curvature $R=\frac{n-2}{2}$.  Suppose each level set of $f$ has vanishing weyl curvature, then its curvature is bounded.
\end{theo}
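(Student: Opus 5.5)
We argue by contradiction, using the point-picking Lemma \ref{point picking} together with the backward pseudolocality Theorem \ref{backward pseudolocality}, following the strategy of \cite{Li-Ou-Qu-Wu}. Suppose $|Rm|$ is unbounded. Lemma \ref{point picking} then gives points $p_j\to\infty$ with $Q_j:=|Rm(p_j)|\geq j$ and $|Rm|\leq 4Q_j$ on $B(p_j, j/\sqrt{Q_j})$.

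\textbf{Step 1: blow-up limit.} Consider the rescaled Ricci flows $g_j(t)=Q_j\, g(Q_j^{-1}t)$ associated to the soliton, based at $(p_j,0)$. On the unit-scale ball $B_{g_j(0)}(p_j,j)$ the curvature is at most $4$. To extract a limit we need a lower volume bound. I would obtain this from the $\kappa$-noncollapsing of shrinkers (Perelman's entropy bound, which holds uniformly on shrinking solitons); this gives $|B(p_j,r)|\geq \kappa r^n$ at the scale $r\sim Q_j^{-1/2}$. With the volume bound and the curvature bound on a ball, Theorem \ref{backward pseudolocality} gives $|Rm|\leq C$ on a backward parabolic neighborhood $P(p_j,0;1/2,-\epsilon^2,0)$ of the rescaled flow. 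Shi's estimates then control all derivatives. Hamilton's compactness theorem yields a pointed limit $(M_\infty,g_\infty(t),p_\infty)$, $t\in(-\epsilon^2,0]$, which is complete at $t=0$ because the radii $j\to\infty$, and which satisfies $|Rm_\infty(p_\infty,0)|=1$.

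\textbf{Step 2: the limit is trivial in the scale-invariant quantities.} The scalar curvature is $R=\frac{n-2}{2}$ and $|Ric|^2=\frac{n-2}{4}$. After rescaling, both become $O(Q_j^{-1})\to 0$, so the limit is Ricci-flat. Next use the structure of the soliton at the level sets. The second fundamental form satisfies $|A|^2=\frac{1}{4f}$, and the Gauss equation together with \eqref{R1a1b} expresses $R_{1\alpha1\beta}$ through $\nabla_{\nabla f}Ric/f$ and bounded terms. From \eqref{ricsigma'} and the vanishing Weyl curvature of $\Sigma$, the full curvature of $\Sigma$ is determined by $Ric^\Sigma$, which is controlled by $Ric$ and $\nabla_{\nabla f}Ric/f$. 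Thus $|Rm|$ is bounded in terms of $|Ric|$, $|A|^2$ and $|\nabla_{\nabla f}Ric|/f$. Under the rescaling, $|\nabla_{\nabla f}Ric|/f\leq |\nabla Ric|/\sqrt f$, and by Shi's estimates $|\nabla Ric|\lesssim Q_j^{3/2}$ locally. The difficulty is that this crude bound is not small, so I would instead pass the identity itself to the limit. Let $X=\nabla f/\sqrt f$, which converges to a parallel unit vector field, since $\nabla^2 f=\frac12 g-Ric$ gives $|\nabla X|\lesssim |Ric|/\sqrt f+1/f$, and this goes to $0$ after rescaling. The limit therefore splits as $N^{n-1}\times\mathbb{R}$. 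The Ricci-flat limit then has $Rm$ equal to that of $N$, and the Weyl-flat level sets converge to $N$. A Ricci-flat, Weyl-flat (hence flat, for $n-1\geq 3$) $N$ makes $Rm_\infty\equiv 0$, contradicting $|Rm_\infty(p_\infty,0)|=1$. In the case $n-1=3$, Ricci-flat already implies flat.

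\textbf{Main obstacle.} The delicate point is justifying that the Weyl tensor of the level sets of $g_j$ converges to the Weyl tensor of the cross-section $N$. This requires the convergence of $X$ to a parallel field and of the level sets to the slices, uniformly in $C^2$ on compact sets, which needs $|\nabla X|$ and $|\nabla^2X|$ to go to $0$ after scaling. I would get these from $\nabla X=(\frac12 g-Ric-\frac12 X\otimes X)/\sqrt f$ and Shi's derivative bounds. The resulting $|\nabla^2 X|_{g_j}\lesssim Q_j^{-1}(|\nabla Ric|/\sqrt f+\cdots)\lesssim Q_j^{1/2}f^{-1/2}$ must also tend to $0$. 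Using the growth estimate $f\sim d^2/4$ and the fact that $Q_j\leq C f(p_j)$ fails in general, I expect to need the point-picking to be done relative to $f$: choose $p_j$ so that $|Rm(p_j)|f(p_j)^{-1}\to 0$, or argue separately when curvature grows at least quadratically. This is the step I would work hardest on.
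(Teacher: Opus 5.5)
Your architecture matches the paper's. You use point-picking, noncollapsing, backward pseudolocality and Hamilton compactness to get a Ricci-flat limit with $|Rm_\infty(p_\infty)|=1$. You then split off the $\nabla f$ direction and use $W^N=0$ to force flatness. You split via the limit of $X=\nabla f/\sqrt f$, whereas the paper applies Cheeger--Gromoll to the limiting integral curves of $\nabla f$; either is fine.

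However, the proposal does not close. You leave open the passage from ``$W^{\Sigma}=0$ on the level sets'' to ``$W^N=0$''. You assert that it needs $C^2$ convergence of the level sets, i.e.\ $|\nabla^2X|_{g_j}\to 0$, and you plan to get this by choosing $p_j$ with $|Rm(p_j)|/f(p_j)\to0$. Lemma \ref{point picking} gives no such control, and nothing guarantees such points exist when the curvature is unbounded. So that repair is not available as stated.

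The missing idea is that this step needs no derivatives of $X$, and hence no control of $\nabla Ric$, because the Weyl tensor is a pointwise algebraic function of the curvature.

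\begin{itemize}
\item By the Gauss equation, $R^{\Sigma}_{\alpha\beta\gamma\eta}=R_{\alpha\beta\gamma\eta}+h_{\alpha\gamma}h_{\beta\eta}-h_{\alpha\eta}h_{\beta\gamma}$, and $W^{\Sigma}$ is the totally trace-free part of $R^{\Sigma}$, a linear projection determined by the induced metric.
\item After rescaling, $|A|_{g_j}=Q_j^{-1/2}|A|_g=(4Q_jf)^{-1/2}\to 0$, using $|A|^2=\frac{1}{4f}$.
\item $Rm_{g_j}\to Rm_\infty$ smoothly.
\item $X_j=Q_j^{-1/2}X$ converges in $C^0$, along a subsequence, to a parallel unit field $X_\infty$, since $|\nabla X_j|_{g_j}=Q_j^{-1/2}|\nabla X|_g\lesssim (Q_jf)^{-1/2}$. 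Incidentally, $\nabla X=(\frac12 g-Ric-\frac12 X\otimes X)/\sqrt f$, so the bound is $|\nabla X|\lesssim 1/\sqrt f$, not $|Ric|/\sqrt f+1/f$; this is harmless.
\end{itemize}

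Hence at every point, $0=W^{\Sigma}_{g_j}$ converges to the Weyl projection of $Rm_\infty$ restricted to $X_\infty^{\perp}$. That projection is exactly $W^N$, because $X_\infty$ is the line factor of the local product $\mathbb{R}\times N$. So $W^N\equiv 0$. Together with $Ric_N=0$ this gives $Rm_\infty\equiv0$, contradicting $|Rm_\infty(p_\infty)|=1$.

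This is what lies behind the paper's brief remark that the second fundamental form of the level sets tends to zero under the rescaling. With this replacement your argument is complete. You can drop the first half of your Step 2, the bound of $|Rm|$ via $|\nabla_{\nabla f}Ric|/f$, which you correctly found inconclusive.
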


\begin{proof}
	Suppose not, by Lemma \ref{point picking}, then there exists a sequence of points $p_j$ divergent to infinity such that for each positive integer $j$, we have $|Rm(p_j)|\geq j$ and
	\ban
	|Rm(x)|\leq 4 |Rm(p_j)|
	\ean
	for $x\in B(p_j, \frac{j}{\sqrt{|Rm(p_j)|}})$.
By the $\kappa$-noncollapsed theorem in \cite{Li-Wang} and the scalar curvature is bounded for $(M, g)$, we get that $\text{Vol}(B(p_i, 1))$ has a uniform positive lower bound \cite{Li-Wang}.	

For the shrinking gradient Ricci soliton  $(M,g,f)$,
 the associated Ricci flow $(M, g(t), p_j)$ is defined on $(-\infty, 0)$, where
\[
g(t):=(-t)\phi^*_tg,\quad \frac{d\phi_t}{dt}=\frac{\nabla f}{-t}
\quad  \text{and}\quad \phi_{-1}=\mathrm{Id}.
\]
Now we can use Theorem \ref{backward pseudolocality} with $r=|Rm|(p_j)^{-\frac{1}{2}}$ to derive that
\ban
|Rm|\leq (\epsilon r)^{-2} \quad on \quad B(p_j, g(-1), \frac{j}{2}r) \times [-1-(\epsilon r)^2, -1].
\ean
	  Then we can apply Hamilton's compactness theorem to obtain that the rescaled manifolds $\left(B(p_j, g, \frac{j}{\sqrt{|Rm(p_j)|}}),  |Rm(p_j)|g, p_j\right)$ converge to a smooth complete Riemannian manifold $(M_\infty, g_\infty, p_\infty)$ with $|Rm(p_\infty)|=1$ which is Ricci flat because $(M^n, g)$ has bounded Ricci curvature and $|Rm(p_j)|\rightarrow \infty$, moreover  $(M_\infty, g_\infty, p_\infty)$ has Euclidean volume growth.
	
	\smallskip
	Since the integral curves of $f$ passing through $p_j$ are geodesics with respect to $(M, g)$, the geodesic segment of these curves contained in $B\left( p_j, \frac{j}{\sqrt{|Rm(p_j)|}}\right) $ will converge to a geodesic line in $(M_\infty, g_\infty)$, then Cheeger-Gromoll's splitting theorem \cite{Cheeger-Gromoll} implies that $M_\infty =\mathbb{R}\times N^{n-1}$, where $N^{n-1}$ is Ricci flat and of Euclidean Volume growth.
	
	\smallskip
	Because $(\Sigma(s), g)$ has second fundamental form
	\ban
	h=\frac{\frac{1}{2}g-Ric}{|\nabla f|},
	\ean
	which tends to zero as $f\rightarrow \infty$,
	so the second fundamental form of $B\left( p_j, g, \frac{j}{\sqrt{|Rm(p_j)|}}\right)   \cap \Sigma(f(p_j))$ with metric $|Rm(p_j)|g$ converges to zero. This implies the level set $B\left( p_j, g, \frac{j}{\sqrt{|Rm(p_j)|}}\right)   \cap \Sigma(f(p_j))$ with the induced rescaled metrics $|Rm(p_j)|g$ will converge to $N^{n-1}$.
	
	Since each level set has vanishing Weyl curvature,
	 $N^{n-1}$ has vanishing Weyl curvature, hence it is flat. This contradicts the fact that $|Rm(p_\infty)|=1$.
\end{proof}

\begin{theo}
	Let $(M^n, g, f)$ be an $n$-dimensional shrinking gradient Ricci soliton with constant scalar curvature $R=\frac{n-2}{2}$.  Suppose each level set of $f$ has vanishing Weyl curvature, then $\lambda_1+\lambda_2\rightarrow 0$ at infinity.
\end{theo}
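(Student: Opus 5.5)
Since $\lambda_1=0$, the claim is that $\lambda_2\to 0$ at infinity. I would argue by contradiction using a blow-down/translation limit along the Ricci flow, with the curvature bound of Theorem \ref{bounded curvature} as the key input. Suppose there is a sequence $p_j\to\infty$ with $\lambda_2(p_j)\ge \delta>0$. By Theorem \ref{bounded curvature} the curvature is bounded, and the uniform volume lower bound for unit balls from \cite{Li-Wang} is available. Shi's derivative estimates then apply to the soliton flow. By Hamilton's compactness theorem, $(M,g,p_j)$ subconverges smoothly to a complete limit $(M_\infty,g_\infty,p_\infty)$. The functions $f-f(p_j)$ are rescaled appropriately: since $|\nabla f|^2=f\to\infty$, one uses $\nabla f/|\nabla f|$, whose Hessian $\nabla^2 f/|\nabla f| = (\frac12 g-Ric)/\sqrt f\to 0$. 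Hence $e_1=\nabla f/|\nabla f|$ converges to a parallel unit vector field on $M_\infty$. Moreover its integral curves are geodesics, so they give a line through $p_\infty$, and $M_\infty$ splits isometrically as $\mathbb R\times N^{n-1}$.

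Next I identify $N$. The level sets $\Sigma(f(p_j))$ have second fundamental form of size $O(f^{-1/2})$, so they converge to $N$. Each has vanishing Weyl curvature, so $N$ is locally conformally flat. The limit inherits the identities $R=\frac{n-2}{2}$ and $|Ric|^2=\frac{n-2}{4}$. It also inherits the soliton structure in the limit: \eqref{ricsigma} gives $Ric^\Sigma = Ric + \nabla_{\nabla f}Ric/f$, and I expect $\nabla_{\nabla f}Ric/f\to 0$. This requires $|\nabla Ric|$ bounded, which follows from Shi, together with $|\nabla f|/f = f^{-1/2}\to 0$. So $Ric_N$ equals the limit of $Ric$ restricted to the level set, with $R_N=\frac{n-2}{2}$ and $|Ric_N|^2=\frac{n-2}{4}$. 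For the limit to be a shrinking soliton one would use the fact that pulled-back limits along the soliton flow are again shrinkers. A cleaner route is Lemma \ref{lek1a} and \eqref{k1a}, which give $K_{1\alpha}\to 0$. That is consistent with the splitting and yields no new information.

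The main obstacle is extracting a contradiction from $\lambda_2(p_\infty)\ge\delta$, and I would use Proposition \ref{lap12} for it. In the limit the $1/f$ terms vanish, so on $M_\infty$ the function $u=\lim(\lambda_1+\lambda_2)$ satisfies, in the barrier sense,
\[
\Delta u \le -\tfrac{2}{n-3}u\,(1-2u),
\]
where the drift term $-\langle\nabla f,\nabla u\rangle$ must be handled. I would take the limit along the Ricci flow instead: the soliton flow is $\partial_t Ric=\Delta Ric+2Rm*Ric$, and the drift is exactly the time derivative. The limit flow is therefore an ancient solution of the form $\mathbb R\times N$ with constant scalar curvature. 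On it, $u$ is a supersolution, $\partial_t u\ge \Delta u+\frac{2}{n-3}u(1-2u)$. From \eqref{la}, $\lambda_2\le\frac12$ unless the other eigenvalues cluster near $\frac12$. The subtle case is $u$ near $\frac12$, where the right-hand side degenerates, and I expect this to be the hardest point. I would handle it by noting that $u=\frac12$ forces, via \eqref{la}, $\lambda_3=\cdots=\lambda_n=\frac12$ up to the constraint $\sum_{\alpha\ge 2}\lambda_\alpha=\frac{n-2}{2}$. That contradicts $\lambda_2\le\lambda_3$, since the count would give $\sum=\frac{n-1}{2}$. Hence $u\le c<\frac12$, where the reaction term $\frac{2}{n-3}u(1-2u)\ge c'u$ is positive. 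A minimum principle for the ancient solution, comparing backward in time, then gives $\inf u\ge e^{c' s}\inf u$ for all $s$. This forces $\inf u=0$; in fact the same argument at the point where $u\ge\delta$ pushed backward shows $u$ blows up, contradicting $u\le\frac12$. Therefore $\lambda_2(p_j)\to 0$.
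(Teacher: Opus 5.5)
You set up the right limit, but the step meant to produce the contradiction fails. Your setup matches the paper's: bounded curvature, a pointed limit $\mathbb{R}\times N^{n-1}$ along $q_j\to\infty$, with $N$ complete, locally conformally flat, $Ric_N\ge 0$, $R_N=\frac{n-2}{2}$ and $|Ric_N|^2=\frac{n-2}{4}$.

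The problem is the minimum-principle step. A supersolution inequality $\partial_t u\ge \Delta u+c'u$ propagates lower bounds on $\inf u$ only \emph{forward} in time. On a bounded ancient solution it gives $\inf u(\cdot,t)\le Ce^{c'(t+1)}$, i.e.\ $\inf u\to 0$ as $t\to-\infty$, not $\inf u\equiv 0$. It also says nothing about earlier times from the pointwise bound $u(p_\infty,-1)\ge\delta$, so nothing "blows up backward". In fact everything you list is consistent. Take $u(x,t)=\delta e^{C(t+1)}$ with $C\ge\frac{2}{n-3}$. It is positive and bounded on $M_\infty\times(-\infty,-1]$, satisfies $\partial_t u\ge \Delta u+\frac{2}{n-3}u(1-2u)$, and equals $\delta$ at $t=-1$.

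The static limit does no better. Dividing Proposition~\ref{lap12} by $|\nabla f|=\sqrt f\to\infty$ leaves only $\partial_{e_1}u_\infty\ge 0$. That is vacuous on $\mathbb{R}\times N$, where $u_\infty$ is constant along the $\mathbb{R}$ factor. A pointed limit at infinity discards the global information that makes Proposition~\ref{lap12} useful.

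Two smaller inaccuracies. The drift is not exactly $\partial_t$: at $t=-1$ one has $\partial_t u=u+\langle\nabla f,\nabla u\rangle$, and for other $t$ the coefficients depend on $t$. Also, $\lambda_2\le\frac{n-2}{2(n-1)}<\frac12$ follows at once by averaging $\lambda_2,\dots,\lambda_n$, whose sum is $\frac{n-2}{2}$.

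There are two ways to close the gap.

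\textbf{The paper's route.} It keeps your limit and uses the rigidity of $N$. By Zhu and Carron--Herzlich, a complete locally conformally flat $N$ with $Ric\ge 0$ is one of four types.
\begin{enumerate}
\item Nonflat and conformal to $\mathbb{R}^{n-1}$. This is excluded by Caffarelli--Gidas--Spruck/Chen--Li: the conformal factor solves $\Delta u+\frac{n-3}{8}u^{\frac{n+1}{n-3}}=0$ on $\mathbb{R}^{n-1}$, and every solution gives an incomplete metric.
\item Conformal to a positive space form. This is also excluded, e.g.\ by Obata such an $N$ would be round, which is incompatible with $|Ric_N|^2=\frac{n-2}{4}$.
\item Locally $\mathbb{R}\times\mathbb{S}^{n-2}$. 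Then $\lambda_1+\lambda_2=0$ at $q_\infty$, contradicting $\lambda_1+\lambda_2\ge\delta$.
\item Flat. This contradicts $R_N>0$.
\end{enumerate}

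\textbf{A global use of Proposition~\ref{lap12}.} Your idea works if you apply the proposition on $M$ itself rather than in a limit.
\begin{enumerate}
\item Using $1-2u\ge\frac{1}{n-1}$ and bounded $|\nabla Ric|$, the proposition gives $\Delta_h u\le -cu$ on $\{f\ge a\}$ for $h=f+\frac{n-2}{n-3}\log f$.
\item Set $I(s)=s^{-1/2}\int_{\Sigma(s)}u\,d\sigma$. Integrate over $\{f>s\}$ as in Section~\ref{sec6} and apply the coarea formula. This gives $sI'(s)\ge c\,e^{h(s)}\int_s^\infty e^{-h(\tau)}I(\tau)\,d\tau$.
\item Hence $I$ is nondecreasing, and then $sI'(s)\ge \frac{c}{2}I(s)$ for large $s$. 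So $I$ grows at least like $s^{c/2}$ unless it vanishes.
\item Since $s^{-1/2}|\Sigma(s)|$ is constant, $I$ is bounded. This forces $I\equiv 0$, i.e.\ $u\equiv 0$ near infinity.
\end{enumerate}
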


\begin{proof}
	Suppose on the contrary, then there exists a sequence of $q_j$ divergent to infinity with $(\lambda_1+\lambda_2)(q_j)\geq \delta$ for some $\delta>0$.
	
	\smallskip
	As \cite{Naber}, define $f_j(x)=\frac{f(x)-f(q_j)}{|\nabla f(q_j)|}$. By Theorem \ref{bounded curvature}, we have the curvature is bounded, so the pointed manifolds $(M, g, q_j)$ converge in Cheeger-Gromov sense to $(M_\infty, g_\infty, q_\infty)$. Since  $|\nabla f_j(q_j)|=1$,
	\ban
	\nabla^2 f_j=\frac{\nabla^2 f}{|\nabla f(q_j)|}=\frac{\frac{1}{2}g-Ric}{|\nabla f(q_j)|}
	\ean
	tends to zero at infinity,  $f_j$ converges to a smooth function $f_\infty$ with $|\nabla f_\infty|(q_\infty)=1$ and $\nabla^2 f_\infty =0$. Hence $M_\infty=\mathbb{R}\times N^{n-1}$, where $N^{n-1}$ is an $n-1$-dimensional complete Riemannian manifold with $Ric\geq 0$ and of constant scalar curvature $\frac{n-2}{2}$. Since each level set of $f$ has vanishing Weyl curvature, then $N^{n-1}$ also has vanishing weyl curvature.

	\smallskip
	Thanks to the classification of complete locally conformally flat manifolds
	with nonnegative Ricci curvature by Zhu \cite{Zhu} and Carron-Herzlich \cite{Carron-Herzlic}, $N^{n-1}$ is
	one of the following:
	
	\smallskip
	(1) $N$ is non-flat and globally conformally equivalent to $\mathbb{R}^{n-1}$;
	
	\smallskip
	(2)  $N$ is globally conformally equivalent to a space form of positive curvature;
	
	\smallskip
	(3)  $N$ is locally isometric to the cylinder $\mathbb{R}\times \mathbb{S}^{n-2}$;
	
	\smallskip
	(4)   $N$ is isometric to a complete flat manifold.\\
	
	If  case (1) happens, then there is a positive function $u$ such that $g_N=u^{\frac{4}{n-3}} g_{E}$ has constant scalar curvature $\frac{n-2}{2}$, where $g_E$ is the Euclidean metric.  Equivalently,
	\ba\label{zheng}
	\Delta u+\frac{n-3}{8} u^{\frac{n+1}{n-3}} =0 \quad on \quad  \mathbb{R}^{n-1}.
	\ea
	By  Caffarelli-Gidas-Spruck \cite{Caffarelli-Gidas-Spruck} or Chen-Li \cite{Chen-Li}, the solution to (\ref{zheng}) has been classified, and none is complete.  Contradiction. The case (2) is impossible, since $N$ is noncompact. If case (3) happens,  it contradicts with $\lambda_1(g_{N^{n-1}})\geq \delta$.  Case (4) can not happen, since $N$ is nonflat.
	
	In all, the Weyl curvature of $N^{n-1}$ couldn't be zero, contradiction.
\end{proof}

\section{The Proof of Theorem \ref{main}}\label{sec6}

In this section, we can finish the proof of Theorem \ref{main}.

\noindent\textbf{Proof of Theorem \ref{main}}

By Theorem \ref{bounded curvature} the curvature is bounded, so $|\nabla Ric|$ is bounded, hence $|\frac{\nabla f\cdot\nabla(\lambda_1+\lambda_2)^2}{f}| \leq \delta(\lambda_1+\lambda_2)$, where $\delta>0$ is sufficiently small.  Then we can apply Proposition \ref{lap12} to obtain that \begin{align*}
\Delta_f(\lambda_1+\lambda_2)-\frac{2}{n-3}\nabla \log f\cdot\nabla (\lambda_1+\lambda_2)\leq -\frac{1}{n-3}(\lambda_1+\lambda_2)
\end{align*}
on $M\setminus D(a)$ for some large $a>0$. Denote $h=f-\frac{2}{n-3}\log f$ and $u= \lambda_1+\lambda_2$, the above inequality becomes
\begin{align*}
\Delta_h u\leq -\frac{1}{n-3}(\lambda_1+\lambda_2).
\end{align*}
Integrating on $M\setminus D(a)$,
\begin{align}
-\int_{\Sigma(a)}\langle\nabla u, \frac{\nabla h}{|\nabla h|}\rangle e^{-h}=\int_{M\setminus D(a)}\Delta_h u e^{-h}\leq-\frac{1}{n-3}\int_{M\setminus D(a)}(\lambda_1+\lambda_2)e^{-h}\leq 0,\label{one direction}
\end{align}	
notice that $\frac{\nabla h}{|\nabla h|}=\frac{\nabla f}{|\nabla f|}$ and the above inequality holds for almost everywhere $b\geq a$.

	Next it sufficies to prove
	\ba\label{equivalent the other direction}
	\int_{\Sigma(b)} \langle\nabla u, \frac{\nabla h}{|\nabla h|}\rangle d\sigma_{\Sigma(b)}\leq 0
	\ea
	for some $b\geq a$.
	
	\vspace{0.5cm}
	
	For this purpose, we consider the following one parameter family of diffeomorphisms,
	\begin{equation*}
		\begin{aligned}
			\begin{cases}
				&\frac{\partial F}{\partial s}=\frac{\nabla f}{|\nabla f|^2},\\
				&F(x, a)=x \in \Sigma(a).
			\end{cases}
		\end{aligned}
	\end{equation*}
	Then $\frac{\partial }{\partial s}f(F(x, s))=\langle\nabla f, \frac{\nabla f}{|\nabla f|^2} \rangle=1$,
	and the advantage of $F$ is that it maps level set of $f$ to other level set, in particular $f(F(x, s))=s$ for any $x\in \Sigma(a)$.
	
	\smallskip
	Suppose $\{x_1, x_2, x_3,\cdot\cdot\cdot, x_{n-1}\}$ are local coordinate chart of $\Sigma(a)$, on $\Sigma(s)$, let  $g(s)(\frac{\partial }{\partial x_i}, \frac{\partial}{\partial x_j}):=g(\frac{\partial F}{\partial x_i}, \frac{\partial F}{\partial x_j})$, $d\sigma_{\Sigma(s)}=\sqrt{det(g_{ij})}dx$, where $dx=dx_1\wedge dx_2\wedge dx_3\wedge\cdot\cdot\cdot\wedge dx_{n-1}$.
	Next we compute the derivatives of $d\sigma_{\Sigma(s)}$.
	\begin{align*}
		&\frac{\partial}{\partial s}d\sigma_{\Sigma(s)}=\frac{\partial}{\partial s}\sqrt{det(g_{ij})}dx\\
		=&\frac{1}{2}\cdot 2 g^{ij}\langle \nabla_{\frac{\partial F}{\partial x_i}}\frac{\partial F}{\partial s}, \frac{\partial F}{\partial x_j}\rangle d\sigma_{\Sigma(s)}\\
		=& g^{ij}\langle \nabla_{\frac{\partial F}{\partial x_i}}\frac{\nabla f}{|\nabla f|^2}, \frac{\partial F}{\partial x_j}\rangle d\sigma_{\Sigma(s)}\\
		=&\frac{1}{|\nabla f|^2}g^{ij}\nabla^2 f(\frac{\partial F}{\partial x_i}, \frac{\partial F}{\partial x_j})d\sigma_{\Sigma(s)}\\
		=&\frac{1}{|\nabla f|^2} g^{ij}\left(\frac{1}{2}g(\frac{\partial F}{\partial x_i}, \frac{\partial F}{\partial x_j})-Ric(\frac{\partial F}{\partial x_i}, \frac{\partial F}{\partial x_j})\right)d\sigma_{\Sigma(s)}\\
		=&\frac{1}{|\nabla f|^2} (\frac{n-1}{2}-\frac{n-2}{2})d\sigma_{\Sigma(s)}=\frac{1}{2s}d\sigma_{\Sigma(s)}.
	\end{align*}
	Hence, it is easy to check that
	\ban
	\frac{\partial}{\partial s}\left(\frac{1}{\sqrt{s}}d\sigma_{\Sigma(s)}\right)=\left(-\frac{1}{2}s^{-\frac{3}{2}}+\frac{1}{\sqrt{s}}\frac{1}{2s}\right) d\sigma_{\Sigma(s)}=0.
	\ean

	Next, define a function
	\ban
	I(s)=\int_{\Sigma(s)} u\cdot \frac{1}{\sqrt{s}} d\sigma_{\Sigma(s)}.
	\ean
	and then we can compute the derivative of $I(s)$ as follows:
	\ban
	I'(s)&&=\frac{\partial}{\partial s}\int_{\Sigma(s)} u\cdot \frac{1}{\sqrt{s}} d\sigma_{\Sigma(s)}\\
	&&=\int_{\Sigma(s)} \langle \nabla u, \frac{\nabla f}{|\nabla f|^2}\rangle \frac{1}{\sqrt{s}}d\sigma_{\Sigma(s)}+\int_{\Sigma(s)} u\frac{\partial}{\partial s}\left(\frac{1}{\sqrt{s}}d\sigma_{\Sigma(s)}\right)\\
	&&=\int_{\Sigma(s)} \langle \nabla u, \frac{\nabla f}{|\nabla f|^2}\rangle \frac{1}{\sqrt{s}}d\sigma_{\Sigma(s)}\\
	&&=\frac{1}{s}\int_{\Sigma(s)} \langle \nabla u, \frac{\nabla h}{|\nabla h|}\rangle d\sigma_{\Sigma(s)},
	\ean
	where $\frac{\nabla f}{\nabla f}=\frac{\nabla h}{|\nabla h|}$ and $|\nabla f|=\sqrt{s}$ were used in the last equality.
	
	\smallskip
	Moreover, since $I(s)$ tends to zero as $s\rightarrow\infty$, there exists $b>a$ such that $I'(b)\leq 0$, i.e.
	\ban
	\int_{\Sigma(b)} \langle \nabla u, \frac{\nabla h}{|\nabla h|}\rangle d\sigma_{\Sigma(b)}\leq 0;
	\ean
	this finishes the proof of (\ref{equivalent the other direction}).

Combining (\ref{one direction}) and (\ref{equivalent the other direction}), we derive that
\ban
\lambda_1+\lambda_2=0
\ean
on $M\setminus D(b)$ for some $b$. This implies that
\ban
\lambda_3=\lambda_4=\cdot\cdot\cdot=\lambda_n\equiv \frac{1}{2}
\ean
on $M\setminus D(b)$.
Hence the function
\ban
G=tr(Ric^3)-\frac{1}{2}|Ric|^2,
\ean
is $0$ on $M\setminus D(b)$.

\smallskip
Because $G$ is an analytic function, it has to be zero, we obtain that $G\equiv 0$ on $M$. Moreover, the equation $0=\Delta_f R=R-2|Ric|^2$ implies that
\ban
G=&tr(Ric^3)-|Ric|^2+\frac{1}{4}R\\
=&\sum_{i=1}^n(\lambda_i-\frac{1}{2})^2 \lambda_i=0.
\ean
Finally we get $\lambda_1=\lambda_2\equiv 0$ and $\lambda_3=\lambda_4=\cdot\cdot\cdot=\lambda_n\equiv \frac{1}{2}$ due to $Ric\geq 0$ and the continuity of $\lambda_1+\lambda_2$. This implies the Ricci curvature has constant rank $n-2$. Therefore, the soliton   is  isometric to a finite quotient of $\mathbb{R}^2 \times \mathbb{N}^{n-2}$ by \cite{FR16}, where $\mathbb{N}^{n-2}$ is an $(n-2)$-dimensional Einstein manifold. Because each level set has vanishing  Weyl curvature, hence it it isometric to $\mathbb{R}^2\times \mathbb{S}^{n-2}$.
We have completed the proof of Theorem \ref{main}.

\qed

\section{Proof of Theorem \ref{theorem2}}
In this section, we continue to consider Ricci shrinker with constant scalar curvature. Actually, we impose additional condition that the sectional curvature has upper bound.

\noindent\textbf{Proof of Theorem \ref{theorem2}}

We apply $\Delta_f$ to $\lambda_1+\lambda_2+\cdots+\lambda_{n-k}$, and the goal is to obtain $\lambda_1+\lambda_2+\cdots+\lambda_{n-k}=0$. Notice that the Ricci curvature is nonnegative, then
\begin{align*}
		&\Delta_f \left( \lambda_1 + \lambda_2 + \cdots + \lambda_{n-k} \right)\\
		&\leq \left( \lambda_1 + \lambda_2 + \cdots + \lambda_{n-k} \right) \\
		&\quad -2\bigl(K_{12}\lambda_2 + K_{13}\lambda_3 + \cdots + K_{1n}\lambda_n\bigr) \\
		&\quad -2\bigl(K_{21}\lambda_1 + K_{23}\lambda_3 + \cdots + K_{2n}\lambda_n\bigr) \\
		&\quad - \cdots - 2\bigl(K_{n-k,1}\lambda_1 + K_{n-k,2}\lambda_2 + \cdots + K_{n-k,n}\lambda_n\bigr) \\
		&= \left( \lambda_1 + \lambda_2 + \cdots + \lambda_{n-k} \right)- 2\left( K_{12}\lambda_2+\cdots+K_{1,n-k}\lambda_{n-k} \right)\\
      &\quad-2\left(K_{21}\lambda_1+\cdots+K_{2,n-k}\lambda_{n-k}  \right)\\
      &\quad-\cdots -2\left(K_{n-k, 1}\lambda_1+\cdots+K_{n-k,n-k-1}\lambda_{n-k-1}  \right)\\
		&\quad -2\lambda_{n-k+1}\left( K_{1,n-k+1} + K_{2,n-k+1} + \cdots + K_{n-k,n-k+1} \right) \\
		&\quad -2\lambda_{n-k+2}\left( K_{1,n-k+2} + K_{2,n-k+2} + \cdots + K_{n-k,n-k+2} \right) \\
		&\quad -\cdots \\
		&\quad -2\lambda_n\left( K_{1n} + K_{2n} + \cdots + K_{n-k,n} \right).
	\end{align*}

For simplicity, we use $|Rm(x)|$ to denote the norm of the Riemannian curvature at $x$.
Since the sectional curvature has upper bound $\frac{1}{2(k-1)}$, so
\begin{align*}
&\Delta_f \left( \lambda_1 + \lambda_2 + \cdots + \lambda_{n-k} \right)\\
&\leq \left( \lambda_1 + \lambda_2 + \cdots + \lambda_{n-k} \right)+ 2\left( \lambda_1 + \lambda_2 + \cdots + \lambda_{n-k} \right)\cdot C\cdot|Rm(x)|\\
		&\quad -2\lambda_{n-k+1}\left(\lambda_{n-k+1}-K_{n-k+1, n-k+2}-\cdots-K_{n-k+1, n} \right) \\
		&\quad -2\lambda_{n-k+2}\left( \lambda_{n-k+2}-K_{n-k+2, n-k+1}-\cdots-K_{n-k+2, n}\right) \\
		&\quad- \cdots \\
		&\quad -2\lambda_n\left( \lambda_n-K_{n,n-k+1}-\cdots-K_{n, n-1} \right) \\
		&= \left( \lambda_1 + \lambda_2 + \cdots + \lambda_{n-k} \right) + 2\left( \lambda_1 + \lambda_2 + \cdots + \lambda_{n-k} \right)\cdot C\cdot|Rm(x)| \\
		&\quad -2\left( \lambda_{n-k+1}^2 + \lambda_{n-k+2}^2 + \cdots + \lambda_n^2 \right) \\
		&\quad +2\lambda_{n-k+1}\left( K_{n-k+1,n-k+2} + K_{n-k+1,n-k+3} + \cdots + K_{n-k+1,n} \right) \\
		&\quad +2\lambda_{n-k+2}\left( K_{n-k+2,n-k+1} + K_{n-k+2,n-k+3} + \cdots + K_{n-k+2,n} \right) \\
		&\quad +\cdots \\
		&\quad +2\lambda_n\left( K_{n,n-k+1} + K_{n,n-k+2} + \cdots + K_{n,n-1} \right) \\
		&\leq \left( \lambda_1 + \lambda_2 + \cdots + \lambda_{n-k} \right) + 2\left( \lambda_1 + \lambda_2 + \cdots + \lambda_{n-k} \right)\cdot C\cdot|Rm(x)|\\
		&\quad - 2\left( \frac{k}{4} - \lambda_1^2 - \lambda_2^2 - \cdots - \lambda_{n-k}^2 \right) \\
		&\quad +2\lambda_{n-k+1}(k-1)\frac{1}{2(k-1)} \\
		&\quad +2\lambda_{n-k+2}(k-1)\frac{1}{2(k-1)} \\
		&\quad + \cdots + 2\lambda_n(k-1)\frac{1}{2(k-1)} \\
		&= \left( \lambda_1 + \lambda_2 + \cdots + \lambda_{n-k} \right)+ 2\left( \lambda_1 + \lambda_2 + \cdots + \lambda_{n-k} \right)\cdot C\cdot|Rm(x)|\\
		&\quad - \frac{k}{2} + 2\left( \lambda_1^2 + \lambda_2^2 + \cdots + \lambda_{n-k}^2 \right) \\
		&\quad + \left( \lambda_{n-k+1} + \lambda_{n-k+2} + \cdots + \lambda_n \right) \\
		&= \left( \lambda_1 + \lambda_2 + \cdots + \lambda_{n-k} \right)+2 \left( \lambda_1 + \lambda_2 + \cdots + \lambda_{n-k} \right)\cdot C\cdot|Rm(x)| \\
		&\quad - \frac{k}{2} + 2\left( \lambda_1^2 + \lambda_2^2 + \cdots + \lambda_{n-k}^2 \right) \\
		&\quad+\left(\frac{k}{2}-\lambda_1-\lambda_2-\cdots-\lambda_{n-k}\right) \\
		&=\left( \lambda_1 + \lambda_2 + \cdots + \lambda_{n-k} \right)\cdot2C|Rm(x)|+ 2\left( \lambda_1^2 + \lambda_2^2 + \cdots + \lambda_{n-k}^2 \right).
\end{align*}

Due to the result in \cite{FR16}, we know that $\lambda_1=\lambda_2=\cdots=\lambda_{n-k}=0$ on $f^{-1}(0)$, then we can apply the strong maximum principle to obtain that $\lambda_1=\lambda_2=\cdots=\lambda_{n-k}=0$ on $M$. Equivalently, $\lambda_{n-k+1}=\cdots=\lambda_n=\frac{1}{2}$.
Recall that on shrinking gradient Ricci soliton with constant scalar curvature, we have the following formula,
\ban
|\nabla Ric|^2=2 K_{ij}\lambda_i \lambda_j-|Ric|^2,
\ean
hence
\ban
|\nabla Ric|^2 \leq 2\cdot \frac{1}{2(k-1)}\cdot\frac{1}{2}\cdot\frac{1}{2}\cdot k(k-1)-\frac{k}{4}=0,
\ean
so $\nabla Ric=0$ on $M$. De Rham's splitting theorem implies that $(M^n, g)$ is isometric to a finite quotient of $\mathbb{R}^{n-k}\times \mathbb{N}^k$, where $\mathbb{N}^k$ is an Einstein manifold with Einstein constant $\frac{1}{2}$. Since the curvature upper bound is $\frac{1}{2(k-1)}$, it is easy to see that $\mathbb{N}^k$ has constant sectional curvature, and must be isometric to $\mathbb{S}^k$.

\vspace{0.5cm}
	\textbf{Acknowledgements}.
	The  authors would like to thank Professor Xi-Nan Ma for his constant encouragement.  The  second author is grateful to Professor Fengjiang Li and Yuanyuan Qu for helpful discussion.


\vspace{0.5cm}	
	\begin{thebibliography}{10}
		
	
		
%
		\bibitem{Chen}B. L. Chen, {\em Strong uniqueness of the Ricci flow}, J. Differential Geom. 82 (2009), no. 2, 363-382.
		\bibitem{Brendle1}S. Brendle, {\em  Rotational symmetry of self-similar solutions to the Ricci flow}, Invent. Math. 194 (2013), no. 3, 731-764.
		
		\bibitem{Brendle2}S. Brendle, {\em Rotational symmetry of Ricci solitons in higher dimensions}, J. Differential Geom. 97 (2014), no. 2, 191-214.
		\bibitem{Caffarelli-Gidas-Spruck}L. Caffarelli,  B. Gidas,    J. Spruck,   {\em Asymptotic symmetry and local behavior of semilinear elliptic equations with critical Sobolev growth}, Comm. Pure Appl. Math. 42 (1989), no. 3, 271-297.
		\bibitem{Cao}H. D. Cao, {\em Existence of gradient K\"ahler-Ricci solitons}, Elliptic and parabolic methods in geometry (Minneapolis, MN, 1994), 1-16.
		
		\bibitem{Cao-Chen}H. D. Cao, Q. Chen, {\em On locally conformally flat gradient steady Ricci solitons}, Trans. Amer. Math. Soc. 364 (2012), no. 5, 2377-2391.
		
		\bibitem{Cao-Chen2}H. D. Cao, Q. Chen,     {\em On Bach-flat gradient shrinking Ricci solitons}, Duke Math. J. 162 (2013), no. 6, 1149-1169.
		
		\bibitem{Cao-Chen-Zhu}H. D. Cao, B. L. Chen, X. P. Zhu, {\em Recent developments on Hamilton's Ricci flow}, Surveys in differential geometry. Vol. XII. Geometric flows, 47-112.
		
		\bibitem{Cao-Zhou}H. D. Cao, D. T. Zhou, {\em On complete gradient shrinking Ricci solitons}, J. Differential Geom. 85 (2010), no. 2, 175-185.
		
		\bibitem{Cao-Xie}H. D. Cao, J. M. Xie, {\em Four-dimensional complete gradient shrinking Ricci solitons with half positive isotropic curvature}, Math. Z., 305 (2023), no.2, 22 pp.
		
		\bibitem{Cao-Wang-Zhang}X. D. Cao, B. Wang, Z. Zhang, {\em On locally conformally flat gradient shrinking Ricci solitons}, Commun. Contemp. Math. 13 (2011), no. 2, 269-282.

		\bibitem{Carron-Herzlic}G. Carron, M. Herzlich, {\em Conformally flat manifolds with nonnegative Ricci curvature}
		Compos. Math. 142 (2006), no. 3, 798-810.







		
		
		\bibitem{Cheeger-Gromoll}J. Cheeger, D. Gromoll, {\em The splitting theorem for manifolds of nonnegative Ricci curvature}, J. Differential Geometry 6 (1971/72), 119-128.
		\bibitem{Chen-Li}W. X. Chen, C. M. Li,   {\em Classification of solutions of some nonlinear elliptic equations}, Duke Math. J. 63 (1991), no. 3, 615-622.
		\bibitem{Chen-Wang}X. X. Chen, Y. Q. Wang, {\em On four-dimensional anti-self-dual gradient Ricci solitons}, J. Geom. Anal. 25 (2015), no. 2, 1335-1343.
		
		\bibitem{Chen-Zhu} B. L. Chen, X.P. Zhu, {\em Uniqueness of the Ricci flow on complete noncompact manifolds}, J. Differential Geom.,74(2006), no. 1, 119-154.
		
		
		
		\bibitem{Cheng-Zhou}X. Cheng, D. T. Zhou, {\em Rigidity of Four-dimensional gradient shrinking Ricci solitons}, J. Reine Angew. Math., 802 (2023), no. 2, 255-274.
		
		
		
	
		
		\bibitem{FR16}M. Fern\'{a}ndez-L\'{o}pez, E. Garc\'{\i}a-R\'{\i}o, {\em On gradient Ricci solitons with constant scalar curvature}, Proc. Amer. Math. Soc. 144 (2016), no. 1,369-378.
		
		\bibitem{Hamilton}R. S. Hamilton, {\em Three-manifolds with positive Ricci curvature}, J. Differential Geom.  17  (1982), no. 2, 255-306.
		
		
		
		\bibitem{Kotschwar}B. Kotschwar, {\em On rotationally invariant shrinking Ricci solitons}, Pacific J. Math. 236 (2008), no. 1, 73-88.
		\bibitem{Li-Ou-Qu-Wu}F. J. Li, J. Y. Ou, Y. Y. Qu, G. Q. Wu, {\em Rigidity of Five-dimensional Shrinking Gradient Ricci Solitons with Constant Scalar Curvature}, arXiv:2411.10712.
		
\bibitem{Li-Wang} Y. Li, B. Wang, Heat kernel on Ricci shrinkers, Calc. Var. 59 (2020), Art. 194.
\bibitem{Li-Wang2}Y. Li, B. Wang, {\em  Heat kernel on Ricci shrinkers (II)}, Acta Math. Sci. Ser. B (Engl. Ed.), 44 (2024), no. 5, 1639-1695.
		
		\bibitem{Eminenti-LaNave-Mantegazza}M. Eminenti, G. LaNave, C. Mantegazza,  {\em Ricci solitons: the equation point of view}, Manuscripta Math. 127 (2008), no. 3, 345-367.
		
		
		
		
		
		
		\bibitem{Munteanu-Wang3}O. Munteanu, J. P. Wang, {\em Geometry of shrinking Ricci solitons}, Compos. Math. 151 (2015), no. 12, 2273-2300.
		
		\bibitem{Munteanu-Wang4}O. Munteanu, J. P. Wang, {\em Positively curved shrinking Ricci solitons are compact}, J. Differential Geom. 106 (2017), no. 3, 499-505.
		
	
		
		\bibitem{Naber}A. Naber, {\em  Noncompact shrinking four solitons with nonnegative curvature}, J. Reine Angew. Math. 645 (2010), 125-153.
		
		\bibitem{Ni-Wallach}L. Ni, N. Wallach, {\em On a classification of gradient shrinking solitons}, Math. Res. Lett. 15 (2008), no. 5, 941-955.
		
		\bibitem{Perelman1}G. Perelman,  {\em The entropy formula for the Ricci flow and its geometric applications}, arXiv:math/0211159v1.
		
		\bibitem{Perelman2}G. Perelman,  {\em Ricci flow with surgery on three-manifolds}, arXiv: math/0303109.
		
 	
		
		\bibitem{Petersen-Wylie}P. Petersen, W. Wylie, {\em On the classification of gradient Ricci solitons}, Geom. Topol. 14 (2010), no. 4, 2277-2300.
		
		
		\bibitem{Pigola-Rimoldi-Setti}S. Pigola, M. Rimoldi, A. Setti,   {\em Remarks on non-compact gradient Ricci solitons}, Math. Z. 268 (2011), no. 3-4, 777-790.
		\bibitem{Ou-Qu-Wu}J. Y. Ou, Y. Y. Qu, G. Q. Wu, {\em Some rigidity results on shrinking gradient Ricci soliton}, arXiv:2411.06395.
				\bibitem{Wu-Wu}G. Q. Wu, J. Y. Wu, {\em Four dimensional shrinkers with nonnegative Ricci curvature}, preprint.

	
		\bibitem{Wu-Zhang}G. Q. Wu, S. J. Zhang,{\em Remarks on shrinking gradient K\"ahler-Ricci solitons with positive bisectional curvature}, C. R. Math. Acad. Sci. Paris 354 (2016), no. 7, 713-716.
		
		\bibitem{Wu-Wu-Wylie}P. Wu, J. Y. Wu, W. Wylie, {\em Gradient shrinking Ricci solitons of half harmonic Weyl curvature}, Calc. Var. Partial Differential Equations 57 (2018), no. 5, Art. 141, 15 pp.
		
		\bibitem{Zhang}Z. H. Zhang, {\em Gradient shrinking solitons with vanishing Weyl tensor}, Pacific J. Math. 242 (2009), no. 1, 189-200.
		
		\bibitem{Zhu}S. H. Zhu, {\em The classification of complete locally conformally flat manifolds of nonnegative Ricci curvature}, Pacific J. Math. 163 (1994), no. 1, 189-199.
	\end{thebibliography}
\end{document}